\documentclass[12pt, reqno]{amsart}

\usepackage{amssymb,latexsym,amsmath,amsfonts}
\usepackage{mathrsfs}
\usepackage{graphicx}
\usepackage[usenames]{color}
\usepackage{hyperref}
\usepackage{comment}
\usepackage{enumitem}
\usepackage{titlesec}

\titleformat{\section}
  {\Large\bfseries}
  {\thesection}{0.8em}{}

\titleformat{\subsection}
  {\large\bfseries}
  {\thesubsection}{0.8em}{}

\titleformat{\subsubsection}
  {\normalsize\bfseries}
  {\thesubsubsection}{0.8em}{}
\definecolor{DPurple}{rgb}{0.46,0.2,0.69}

\numberwithin{equation}{section}

\allowdisplaybreaks

\theoremstyle{definition}
\newtheorem{definition}{Definition}[section]

\theoremstyle{remark}

 \theoremstyle{plain}
\newtheorem{theorem}[definition]{Theorem}

\newtheorem{lemma}[definition]{Lemma}
\newtheorem{proposition}[definition]{Proposition}
\newtheorem{example}{Example}[section]
\newtheorem{corollary}[definition]{Corollary}


\title{Primitive central idempotents of a finite group over a field}

\author{Satyam Shukla}
\address{Department of Mathematics, Babasaheb Bhimrao Ambedkar Univeristy, Lucknow-226025, India}
\email{satyam7112shukla@gmail.com}
\author{Jagmohan Tanti}
\address{Department of Mathematics, Babasaheb Bhimrao Ambedkar Univeristy, Lucknow-226025, India}
\email{jagmohan.t@gmail.com} 
\begin{document}
\begin{abstract}
We give a method to compute the primitive central idempotents of a semi-simple finite group algebra associated with an irreducible character $\chi$ such that for every $g\in G$, either $\chi(g)=0$ or all eigenvalues of $\rho(g)$ have the same order. We then show several consequences which highlight the significance and possible applications of our approach.
\end{abstract}
\keywords{Irreducible representation; Primitive central idempotent; Group algebra; $*$-cleanness.}
\maketitle

\section{Introduction} 
In this paper, $\mathbb{F}=\mathbb{F}_q$ denotes a finite field with $q$ elements, and $G$ is a finite group such that the group algebra $\mathbb{F}[G]$ is semi-simple, or equivalently $(q,|G|)=1$. The study of Wedderburn decomposition of $\mathbb{F}[G]$ is a tool to deal with several classical problems(see, e.g., \cite{Broch}). The problem of determining the Wedderburn decomposition of $\mathbb{F}[G]$ naturally leads to computation of the primitive central idempotents of $\mathbb{F}[G]$. For an irreducible character $\chi$ of $G$, let $\mathbb{F}(\chi)$ denote the character field of $\chi$. All the characters of any finite group are considered as characters in $\mathbb{F}(\chi)$, $e(\chi)=\frac{\chi(1)}{|G|}\sum_{g\in G}\chi(g^{-1})g$ is the primitive central idempotents of $\mathbb{F}(\chi)[G]$ associated to $\chi$ and $e_{\mathbb{F}}(\chi)$ is the only primitive central idempotent $e$ of $\mathbb{F}[G]$ such that $e(\chi)e\neq 0$. The Galois group $Gal(\mathbb{F}(\chi)/\mathbb{F})$ of the field extension $\mathbb{F}(\chi)/\mathbb{F}$ acts on $\mathbb{F}(\chi)[G]$ by acting on the coefficients, that is for $\sigma\in Gal(\mathbb{F}(\chi)/\mathbb{F})$, we have
\begin{center}
  $$\sigma\sum_{g\in G}a_gg=\sum_{g\in G}\sigma(a_g)g.$$
\end{center}
We recall the following formula \cite{T. Yamada}
\begin{center}
$$e_{\mathbb{F}}(\chi)=\sum_{\sigma\in Gal(\mathbb{F}(\chi)/\mathbb{F})}\sigma(e(\chi)).$$  
\end{center}
The decomposition of $\mathbb{F}[G]$ as a direct sum of matrix rings over division rings enables to produce all the ideals of $\mathbb{F}[G]$. If $e_1, ..., e_m$ are the primitive central idempotents of $\mathbb{F}[G]$, then $\mathbb{F}[G]=\mathbb{F}[G]e_1\oplus ...\oplus \mathbb{F}[G]e_m$ is the Wedderburn decomposition of $\mathbb{F}[G]$. If $G$ is cyclic, then the primitive central idempotents of $\mathbb{F}[G]$ are in a one-to-one correspondence with $q$-cyclotomic classes module \cite{V. S. Pless}, and using this, it is not difficult to compute the primitive idempotents and the Wedderburn decomposition of any commutative finite group algebra [\ref{P: abelian}].\smallskip
 
The explicit description of the primitive central idempotents of the rational group algebra of a finite abelian group $G$ is well known, and has been given in several ways (\cite{Ayoub} \cite{Goodaire} \cite{Jespers} \cite{Olivieri}). A description of the primitive central idempotents of $\mathbb{Q}[G]$, in the case when $G$ is a finite nilpotent group, has been given by Jespers, Leal, and Paques \cite{Jespers}. Olivieri Rio, and Simon \cite{Olivieri} have obtained an expression of the primitive central idempotents of $\mathbb{Q}[G]$ associated with a monomial irreducible characters of $G$.\smallskip

G. K. Bakshi, and I. B. S. Passi \cite{Bakshi}  established the primitive central idempotents of the rational group algebra associated with complex irreducible characters satisfying a certain property, here called the property $\wp$. A complex irreducible character $\chi$ of a finite group $G$, with representation $\rho$, has the property $\wp$ if, for every $g\in G$, either $\chi(g)=0$ or all eigenvalues of $\rho(g)$ have the same order.\smallskip

Our primary goal is to calculate the primitive central idempotents of the finite group algebra linked to irreducible characters which display certain property, denoted as $\varrho$. An irreducible characters $\chi$ of a finite group $G$, associated with the representation $\rho$, is considered to possess property $\varrho$ if, for every $g\in G$, either $\chi(g)=0$ or all eigenvalues of $\rho(g)$ have the same order.\smallskip

In Section 3, We give the main result (Theorem \ref{T: complex}), a method to compute the primitive central idempotents of $\mathbb{F}[G]$ associated with irreducible characters of $G$ having property $\varrho$ and we then show several consequences which highlight the significant and possible applications of our approach. We show that any irreducible character $\chi$ of a group $G$ of degree $\sqrt{[G:Z(\chi)]}$, where $Z(\chi)/\text{ker}(\chi)$ is the centre of $G/\text{ker}(\chi)$, has the property $\varrho$, and we obtain explicit expression of primitive central idempotent of $\mathbb{F}[G]$ associated with such a character $\chi$ (corollary 1). We thus derive primitive central idempotents of $\mathbb{F}[G]$ associated with any irreducible character $\chi$ of degree $\sqrt{[G:Z(G)]}$, where $Z(G)$ is the centre of $G$ (corollary 2). The expressions of primitive central idempotents of $\mathbb{F}[G]$ associated with any irreducible character $\chi$ with $G/Z(\chi)$ abelian are also obtained (corollary 3).\smallskip

In Section 4, we apply the results of section 3 to obtain primitive central idempotent in the group algebras of certain finite groups.

\subsection*{Notations}
In this article, $G$ is a finite group, and $\text{o}(G)$ denotes the order of $G$. By $H\leq G$ (resp. $H\triangleleft G$) means that $H$ is a subgroup(resp. normal subgroup) of $G$. If $H\leq G$ then $N_G(H)$ denotes the normalizer of $H$ in $G$ and we set $\hat{H}=\frac{1}{\text{o}(H)}\sum_{h\in H}h$, an idempotent of $\mathbb{F}[G]$. For $g\in G$, $\langle g\rangle$ denotes the subgroup of $G$ generated by $g$ and $\text{o}(g)$ denotes the order of $g$. For any $H\leq G$, $[G:H]$ denotes the index of $H$ in $G$. Following Osnel Broch et al. [3], if $G$ is cyclic then the set $G^*$ of irreducible characters of $G$ is a group with the natural product $(\chi_1\chi_2)(g)=\chi_1(g)\chi_2(g)$, for $\chi_1,\chi_2\in G^*$ and $g\in G$. Furthermore $G$ and $G^*$ are isomorphic and in particular $G^*$ are precisely the faithful representations of $G$ and let $C(G)=C_q(G)$ denotes the set of $q$-cyclotomic classes of $G^*$ that contains generators of $G^*$. Let $N\trianglelefteq G$ such that $G/N$ is cyclic of order $k$ and $C\in C(G/N)$. If $\chi\in C$ and $tr=tr_{\mathbb{F}(\zeta_k)/\mathbb{F}}$ denotes the trace of field extension $\mathbb{F}(\zeta_k)/\mathbb{F}$, then we set \\\\
$\epsilon_C(G,N)=\frac{1}{\text{o}(G)}\sum_{g\in G}tr(\chi(\overline{g}))g^{-1}=[G:N]^{-1}\hat{N}\sum_{X\in G/N}tr(\chi(X))g^{-1}_X$,\\\\ where $\bar{g}$ denotes the image of $g$ in $G/N$ and $g_X$ denotes a representative of $X\in G/N$ and $\epsilon_C(G,G)=\hat{G}$. For any complex character $\chi$ of $G$, $\text{ker}(\chi)=\{g\in G|\chi(g)=\chi(1)\}$. By $Irr(G)$, we denote the set of irreducible complex characters of $G$. For any characters $\chi$ and $\psi$ of $G$,
\begin{center}
  $[\chi,\psi]=\frac{1}{\text{o}(G)}\sum_{g\in G}\chi(g)\overline{\psi(g)}$
\end{center}
is the inner product of the character $\chi$ and $\psi$.\smallskip

Let $N\trianglelefteq G$, and $H/N=\langle Na \rangle$ a cyclic subgroup of $G/N$. If $H\neq N$ and $\text{o}(H/N)=p^{\alpha_1}_{1}p^{\alpha_2}_{2}...p^{\alpha_n}_{n}$, $p_i's$ distinct primes, $r_s's\geq 1$, we define
\begin{center}
  $E_{N,H}=\frac{(p_1-1)(p_2-1)...(p_n-1)}{\text{o}(N)p_1p_2...p_n}\sum_{g\in K}\frac{\mu(d(g))}{\phi(d(g))}g$,
\end{center}
where, for any $g\in K$, $d(g)$ is order of $g$ modulo $N$, and $K/N$ is the subgroup of $H/N$ of order $p_1p_2...p_n$. Set $E_{N,N}=\hat{N}$.
\section{Preliminaries}
This section will provide more rigorous definitions and some lemmas. We also introduce some auxiliary results which will be used repeatedly.\smallskip

Recall that for any integer $n\geq1$,\smallskip

$\phi(n):$= number of integers $i$, $1\leq i \leq n$, $gcd(i,n)$,

\[
\mu(n) :=
\begin{cases}
1, & \text{if } n = 1,\\
(-1)^r, & \text{if $n$ is square-free and is a product of $r$ distinct primes},\\
0, & \text{if $n$ is not square-free}.
\end{cases}
 \]
\begin{lemma}\cite[Lemma 1]{Bakshi} \label{L: root} For any integer $n\geq1$ and any primitive $n$th root of unity $\zeta$,
\begin{equation}\nonumber
  \sum_{1 \leq i \leq n, (i,n)=1}\zeta^{i} =\mu(n). 
\end{equation}
\end{lemma}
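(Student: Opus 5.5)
The plan is to prove the statement by Möbius inversion, starting from the fact that every $n$th root of unity is a primitive $d$th root of unity for exactly one divisor $d$ of $n$. Define
\[
S(n)=\sum_{1\leq i\leq n,\,(i,n)=1}\zeta_n^{i},
\]
where $\zeta_n$ is a fixed primitive $n$th root of unity. As $i$ runs over the residues coprime to $n$, the power $\zeta_n^{i}$ runs over all primitive $n$th roots of unity. Hence $S(n)$ is the sum of all primitive $n$th roots of unity. In particular $S(n)$ does not depend on which primitive root $\zeta$ we picked, which justifies the phrase ``any primitive $n$th root of unity'' in the statement.

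Next, partition the set $\mu_n$ of all $n$th roots of unity according to exact order. Each element of $\mu_n$ has order $d$ for a unique divisor $d\mid n$. Conversely, every primitive $d$th root of unity with $d\mid n$ lies in $\mu_n$. This gives
\[
\sum_{d\mid n}S(d)=\sum_{j=0}^{n-1}\zeta_n^{j},
\]
and the right-hand side is $1$ if $n=1$ and $0$ if $n>1$. The second case is the geometric series $(\zeta_n^{n}-1)/(\zeta_n-1)=0$, which is valid because $\zeta_n\neq 1$ when $n>1$.

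The Möbius function satisfies the same identity: $\sum_{d\mid n}\mu(d)$ equals $1$ for $n=1$ and $0$ otherwise. This is the standard fact obtained by expanding $(1-1)^r$ over the $r$ distinct prime divisors of $n$. So $S$ and $\mu$ satisfy the same recursion $\sum_{d\mid n}f(d)=[n=1]$, and this recursion determines $f$ uniquely by strong induction on $n$, since $f(n)=[n=1]-\sum_{d\mid n,\,d<n}f(d)$. Therefore $S(n)=\mu(n)$ for all $n$.

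No step is a real obstacle. The one point needing care is the context: the paper works over a finite field $\mathbb{F}$ with $(q,|G|)=1$, so the root of unity may live in an extension of $\mathbb{F}_q$ rather than in $\mathbb{C}$. The argument above uses only two facts: that $\mu_n$ has exactly $n$ elements, and that $\zeta_n-1\neq 0$ when $n>1$. Both hold in any field whose characteristic does not divide $n$, where $x^n-1$ is separable. So the identity holds in that setting too, with $\mu(n)$ read as its image in the prime field.
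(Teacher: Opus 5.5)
Your proof is correct, but it takes a different route from the paper's. The paper evaluates the sum directly. For $n=p^k$ with $k\ge 2$ it uses the explicit reduced residue system $\{a+bp : 1\le a\le p-1,\ 0\le b\le p^{k-1}-1\}$ to factor the sum as $\bigl(\sum_a\zeta^a\bigr)\bigl(\sum_b\zeta^{pb}\bigr)$, and the second factor vanishes as a geometric series. For general $n$ it builds a reduced residue system via the Chinese Remainder Theorem, so the sum splits as a product of sums of primitive $p_j^{a_j}$th roots of unity; it then applies the prime-power case, implicitly using multiplicativity of $\mu$. You instead partition the $n$th roots of unity by exact order to get $\sum_{d\mid n}S(d)=[n=1]$, and conclude by uniqueness of the solution of this recursion, i.e.\ M\"obius inversion. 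Your argument is shorter and uniform in $n$: it needs no explicit residue systems, no CRT bookkeeping and no separate prime-power case, and it makes independence from the choice of $\zeta$ transparent. The paper's computation gives more structure in exchange: it shows $S(n)$ is multiplicative, factoring into local prime-power pieces, and shows concretely why non-squarefree $n$ give $0$ (the vanishing factor $\sum_b\zeta^{pb}$). Your remark about the finite-field setting is worth making, since the lemma is quoted from a complex setting but applied in $\mathbb{F}(\zeta)$ in Lemma~\ref{L: order}. Two small points there: the characteristic condition is automatic, because a field containing a primitive $n$th root of unity cannot have characteristic dividing $n$; and the paper's proof, being purely algebraic, is equally valid in that generality.
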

\begin{proof} 
We first consider the case when $n=p^k, p$ prime, $k\geq1$. The result is clear if $k=1$. Suppose $k\geq2$. Let $S= \{a+b p|1\leq a\leq p-1, 0\leq b\leq p^{k-1}-1\}$. Then $S$ is reduced residue system modulo $p^k$. Therefore, 
 \begin{eqnarray}\nonumber
    \sum_{1 \leq i \leq n, (i,n)=1}\zeta^{i}&=& \sum_{i\in S}\zeta^{i} \\\nonumber
    &=& \bigg(\sum_{a}\zeta^{a}\bigg)\bigg(\sum_{b}(\zeta^b)^{i}\bigg) \\\nonumber
   &=& \bigg(\sum_{a}\zeta^{a}\bigg)\bigg(\frac{\zeta^{p^{k}}-1}{\zeta^p-1}\bigg) \\\nonumber
    &=& 0 \\\nonumber
    &=& \mu(p^k)\\\nonumber
 \end{eqnarray}
 Now let $n=p^{a_1}_1p^{a_2}_2...p^{a_r}_r, p_j$ distinct primes, $a_j\geq1$ for all $j$. Let $m_j=\dfrac{n}{{p_j}^{a_j}}, 1\leq j\leq r$. For each $j, 1\leq j\leq r$, choose an integer $x_j$ such that $m_jx_j\equiv 1 \mod (p^{a_j}_j)$, and consider the set $S= \{\sum_{j=1}^{r}m_jx_ju_j|1\leq u_j\leq p^{a_j}_j, gcd(u_j, {p_j}^{a_j})=1\}$. Then $S$ is a reduced system modulo $n$ and thus            
 \begin{center}
 $\sum_{1 \leq i \leq n, (i,n)=1}\zeta^{i}=\sum_{i\in S}\zeta^{i}=\bigg(\sum_{u_1}(\zeta^{m_1x_1})^{u_1}\bigg)...\bigg(\sum_{u_r}(\zeta^{m_rx_r})^{u_r}\bigg)$.
 \end{center}
Note that, for each $j, \zeta^{m_jx_j}$ is a primitive $p^{a_j}_jth$ root of unity. Therefore, the required result now follows by using the preceding case.
\end{proof}\smallskip

A ring $R$ is called clean if every element of $R$ is the sum of a unit and idempotent. A ring $R$ is called a $*$-ring (or ring with involution $*$) if there exists an operation $*:R\rightarrow R$ such that\smallskip

$(x+y)^*=x^*+y^*$, $(xy)^*=y^*x^*$, and $(x^*)^*=x$, for all $x,y\in R$.\smallskip

We call an element $p$ of a $*$-ring $R$ a projection if $p$ is a $*$-invariant idempotent, i.e. $p^* = p = p^2$, and a $*$-ring $R$ a $*$-clean ring if each element of $R$ is the sum of a unit and projection. Let $G$ be a finite group and $\mathbb{F}$ a finite field with characteristic not dividing the order of $G$. Consider the group algebra $\mathbb{F}[G]$ equipped with a involution $*$ (The canonical involution defined by $*:\mathbb{F}[G]\rightarrow \mathbb{F}[G]$ such that $(\sum_{g\in G}a_gg)^*\rightarrow\sum_{g\in G}a_gg^{-1}$)\cite{Wood} 
\begin{lemma}{(Maschke)}\label{L: characteristic}
 \cite[Theorem 1.2]{S. Lang}. Let $G$ be a finite group and $\mathbb{F}$ a field whose characteristic does not divide the order of $G$. Then the group ring $\mathbb{K}[G]$ is semi-simple. 
\end{lemma}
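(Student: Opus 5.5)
The plan is to prove Maschke's theorem in the standard way: show that every submodule of a $\mathbb{K}[G]$-module has a complement. This holds in particular for left ideals of $\mathbb{K}[G]$ regarded as a module over itself, which gives semi-simplicity of the ring. (We read the $\mathbb{K}$ in the conclusion as the field $\mathbb{F}$ of the hypothesis.)

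Let $M$ be a (finitely generated) left $\mathbb{K}[G]$-module and $W\subseteq M$ a submodule. Since $\mathbb{K}$ is a field, $W$ has a complement as a $\mathbb{K}$-vector space. Let $\pi_0:M\to W$ be a $\mathbb{K}$-linear projection, so that $\pi_0|_W=\mathrm{id}_W$. The key step is to average $\pi_0$ over the group:
$$\pi(m)=\frac{1}{\text{o}(G)}\sum_{g\in G} g\,\pi_0(g^{-1}m).$$
This is legitimate precisely because $\text{char}(\mathbb{K})\nmid \text{o}(G)$, so $\text{o}(G)$ is invertible in $\mathbb{K}$.

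Next we check three routine properties of $\pi$.
\begin{enumerate}[label=(\roman*)]
\item $\pi(M)\subseteq W$, since $W$ is $G$-stable.
\item $\pi|_W=\mathrm{id}_W$. For $w\in W$ we have $g^{-1}w\in W$, so each summand equals $g g^{-1}w=w$, and the average is $w$.
\item $\pi$ is $\mathbb{K}[G]$-linear. For $h\in G$, reindexing the sum by $g\mapsto hg$ gives $\pi(hm)=h\pi(m)$.
\end{enumerate}
It follows that $\ker\pi$ is a $\mathbb{K}[G]$-submodule and $M=W\oplus\ker\pi$.

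To conclude, apply this to $M=\mathbb{K}[G]$. Every left ideal is then a direct summand, so $\mathbb{K}[G]$ is completely reducible as a left module over itself. Because $\mathbb{K}[G]$ is finite-dimensional, it is a finite direct sum of minimal left ideals, which is to say a semi-simple ring. By Wedderburn's theorem this yields the decomposition used throughout the paper.

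The only real obstacle is justifying the factor $1/\text{o}(G)$, which is exactly where the characteristic hypothesis enters. Everything after that is direct verification.
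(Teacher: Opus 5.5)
Your proof is correct. Averaging a $\mathbb{K}$-linear projection onto $W$ over $G$ is legitimate because $\text{o}(G)$ is invertible in $\mathbb{K}$, and it gives a $\mathbb{K}[G]$-linear idempotent with image $W$, so every left ideal is a direct summand. The paper gives no proof of its own and only cites Lang's Theorem 1.2, whose proof is this same averaging argument. Your reading of $\mathbb{K}$ as $\mathbb{F}$ is the intended one.
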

\begin{proposition}\label{P: abelian}
\cite{Broch}. If $G$ is a finite abelian group of order $n$ and $\mathbb{F}$ is a finite field of order $q$ such that $gcd(n,q)=1$, then the map $(N,C)\rightarrow\epsilon_C(G,N)$ is bijection from the set of pair $(N,C)$ with $N\trianglelefteq G$, such that $G/N$ is cyclic and $C\in C_q(G/N)$ to the set of primitive central idempotents of $\mathbb{F}[G]$ i.e. $e_{\mathbb{F}}(\chi)=\frac{1}{\text{o}(G)}\sum_{g\in G}tr(\psi(\bar{g}))g^{-1}=\epsilon_C(G,N)$. Further for every $N\triangleleft G$ and $C\in C(G/N)$, $\mathbb{F}[G]\epsilon_C(G/N)\cong \mathbb{F}(\zeta_k)$, where $\text{k}=[G:N]$ and $N=\text{ker}(\chi)$ and let $\psi$ be the faithful character of $G/N$ given by $\psi(\bar{g})=\chi(g)$.
\end{proposition}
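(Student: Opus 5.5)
The plan is to build the primitive central idempotents from the decomposition of $\mathbb{F}[G]$ along the cyclic quotients of $G$, and then identify each piece with $e_{\mathbb{F}}(\chi)$ via Yamada's formula.

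\textbf{Step 1: local decomposition.} Since $G$ is abelian, every irreducible character $\chi$ is linear. Put $N=\ker(\chi)$. Then $G/N$ embeds in $\mathbb{C}^*$, so it is cyclic of order $k=[G:N]$. The map $\psi(\bar g)=\chi(g)$ is a faithful character of $G/N$, that is, a generator of $(G/N)^*$. Let $C$ be its $q$-cyclotomic class.

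\textbf{Step 2: the idempotent attached to $\chi$.} The character field is $\mathbb{F}(\chi)=\mathbb{F}(\zeta_k)$. Its Galois group over $\mathbb{F}$ is generated by the Frobenius $x\mapsto x^q$, which sends $\psi$ to $\psi^q$. Hence the Galois orbit of $\chi$ is exactly the set of lifts of the characters in $C$. Since $\chi(1)=1$, Yamada's formula gives
\[
e_{\mathbb{F}}(\chi)=\sum_\sigma \sigma(e(\chi))=\frac{1}{\text{o}(G)}\sum_{g\in G}\Big(\sum_\sigma\sigma(\chi(g^{-1}))\Big)g=\frac{1}{\text{o}(G)}\sum_{g\in G}tr(\psi(\bar g))\,g^{-1},
\]
where the last step reindexes $g\mapsto g^{-1}$ and uses that the sum over $\sigma$ is the trace $tr_{\mathbb{F}(\zeta_k)/\mathbb{F}}$. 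This is exactly $\epsilon_C(G,N)$.

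To get the second expression, group the elements $g$ by cosets of $N$, noting that $\psi(\bar g)$ is constant on each coset. This yields $[G:N]^{-1}\hat N\sum_X tr(\psi(X))g_X^{-1}$.

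\textbf{Step 3: bijectivity.} For injectivity, suppose $(N,C)$ is given. Then $N$ is recovered as the common kernel of the characters in the orbit, and $C$ is recovered as that orbit. For surjectivity, let $(N,C)$ be any pair and pick $\psi\in C$. Inflating $\psi$ to $G$ gives an irreducible $\chi$ with $\ker\chi=N$, whose idempotent is $\epsilon_C(G,N)$ by Step 2. Since every primitive central idempotent equals $e_{\mathbb{F}}(\chi)$ for some $\chi$, and $\chi$ yields a pair by Step 1, the correspondence is bijective. The case $N=G$ corresponds to the trivial character and gives $\hat G$.

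\textbf{Step 4: the simple component.} The ring $\mathbb{F}[G]\epsilon_C(G,N)$ is a simple commutative algebra, hence a field. Consider the homomorphism $\mathbb{F}[G]\to\mathbb{F}(\zeta_k)$ given by $g\mapsto\psi(\bar g)$. Its image is $\mathbb{F}(\zeta_k)$. This map kills $1-\epsilon_C(G,N)$, because the other idempotents correspond to characters outside the orbit and orthogonality applies. Comparing dimensions, the dimension of the component equals the orbit size $|C|=[\mathbb{F}(\zeta_k):\mathbb{F}]$. This gives the isomorphism.

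I expect the main obstacle to be the bookkeeping of Step 2. One must be careful to match the Galois orbit with the cyclotomic class, and to keep track of the inversion $g\mapsto g^{-1}$ together with the normalization by $\text{o}(G)$ versus $[G:N]$.
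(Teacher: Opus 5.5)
The paper does not prove this proposition; it only quotes it from the cited work of Broche and del R\'io, so there is no internal argument to compare yours with. Your proof is correct. It is the standard argument: Yamada's formula turns $e_{\mathbb{F}}(\chi)$ into a trace over $\mathrm{Gal}(\mathbb{F}(\zeta_k)/\mathbb{F})$. Since that group is generated by Frobenius, the Galois orbit of $\chi$ is exactly the $q$-cyclotomic class $C$ of the faithful character $\psi$ of $G/N$. Your reindexing $g\mapsto g^{-1}$ and the regrouping by cosets of $N$ are both right.

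A few points should be tightened.

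\textbf{Values in $\overline{\mathbb{F}}$.} The characters must take values in $\overline{\mathbb{F}}$, not in $\mathbb{C}$; otherwise $\mathbb{F}(\chi)$, the Frobenius and $tr_{\mathbb{F}(\zeta_k)/\mathbb{F}}$ make no sense. Cyclicity of $G/N$ still follows, because finite subgroups of $\overline{\mathbb{F}}^{*}$ are cyclic.

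\textbf{Well-definedness versus surjectivity.} The first half of your ``surjectivity'' paragraph actually proves that the map is well defined. It shows that $\epsilon_C(G,N)$ is a primitive central idempotent. It is also independent of the choice of $\psi\in C$, since $tr(x^q)=tr(x)$. The real surjectivity argument is your last sentence.

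\textbf{Injectivity.} Say explicitly why the idempotent determines the orbit $\mathcal{O}$ of $\chi$. By Yamada's formula, $\epsilon_C(G,N)=\sum_{\chi'\in\mathcal{O}}e(\chi')$, and the $e(\chi')$ are nonzero and pairwise orthogonal. Hence $\mathcal{O}=\{\chi' : e(\chi')\epsilon_C(G,N)\neq 0\}$, and $\mathcal{O}$ in turn recovers $N$ and $C$.

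\textbf{Step 4.} You can drop the dimension count, whose justification (extending scalars to $\overline{\mathbb{F}}$) you left out. Orthogonality gives $\Phi(\epsilon_C(G,N))=\chi(e_{\mathbb{F}}(\chi))=1$. So $\Phi$ restricts to a surjective unital homomorphism from the field $\mathbb{F}[G]\epsilon_C(G,N)$ onto $\mathbb{F}(\zeta_k)$. A unital homomorphism out of a field is automatically injective, so this is the isomorphism.
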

\begin{lemma}\label{L: commutative}
 \cite[Theorem 2.2]{C. Li}. A commutative $*$-ring is $*$-clean if and only if it is clean and every idempotent is self-adjoint.
\end{lemma}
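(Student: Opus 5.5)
The plan is to prove the two directions separately. The converse direction is immediate, and the forward direction reduces to a short algebraic computation in a commutative ring.

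\emph{Clean with self-adjoint idempotents implies $*$-clean.} Suppose $R$ is clean and every idempotent is self-adjoint. Given $x\in R$, cleanness gives $x=u+e$ with $u$ a unit and $e^2=e$. By hypothesis $e^*=e$, so $e$ is a projection, and $x$ is the sum of a unit and a projection.

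\emph{$*$-clean implies clean.} This is trivial, since every projection is in particular an idempotent.

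\emph{$*$-clean implies every idempotent is self-adjoint.} This is the real content. Let $e\in R$ be idempotent. By $*$-cleanness I will write $e=u+p$ with $u$ a unit and $p=p^2=p^*$, so that $u=e-p$. Using commutativity and $e^2=e$, $p^2=p$, I expand
\[
(e-p)^2=e-2ep+p.
\]
Multiplying once more by $e-p$, the cross terms cancel and
\[
(e-p)^3=e-p.
\]
Hence $u^3=u$, and since $u$ is invertible, $u^2=1$. This gives the relation $e+p-2ep=1$.

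Multiplying this relation by $e$ gives $e+ep-2ep=e$, so $ep=0$. Substituting back yields $e+p=1$, that is, $e=1-p$.

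Finally, $1^*=1$ in any $*$-ring: from $1^*x=(x^*1)^*=x$ for all $x$, the element $1^*$ is a left identity, and similarly a right identity, so $1^*=1$. Therefore
\[
e^*=1^*-p^*=1-p=e,
\]
so $e$ is self-adjoint.

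The main obstacle is spotting that the unit $u=e-p$ satisfies $u^3=u$. This is exactly where commutativity is used: it is what makes the expansion of $(e-p)^3$ collapse. Everything after that is routine.
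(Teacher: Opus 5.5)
Your proof is correct. The paper gives no argument for this lemma; it only cites Theorem 2.2 of Li. So there is no in-paper proof to compare with, and your write-up serves as a self-contained replacement for the citation.

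The two easy directions are as you say, and the substantive one is sound. From $u=e-p$ and $ep=pe$ you get $u^3=u$, hence $u^2=1$, that is, $e+p-2ep=1$. Multiplying by $e$ forces $ep=0$, so $e=1-p$. Your check that $1^*=1$ is exactly what is needed to conclude $e^*=e$.

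Note that commutativity is used only to get $ep=pe$. Suppose instead you assume only that the unit and the projection commute, $up=pu$. Then $ep=(u+p)p=up+p=pu+p=pe$ still holds. Your computation therefore shows that every idempotent of a strongly $*$-clean ring, not necessarily commutative, is a projection. The citation buys brevity. Your argument makes the lemma independent of the reference, and this direction of it already works in that wider setting.
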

 \section{Some Results}
Let $\chi\in Irr(G)$ and $\rho$ a representation of $G$ affording the character $\chi$. Let $\phi$ denotes the corresponding representation of $G/\text{ker}(\chi)$ and $\psi$ the character of $\phi$. The following result provides some equivalent conditions for $\chi \in Irr(G)$ to have the property $\varrho$.
\begin{proposition}
For an irreducible character $\chi$ of finite group $G$, and any representation $\rho$ affording $\chi$, the following conditions on an element $g\in G$ are equivalent:\smallskip

 (\romannumeral1)  The associated character $\phi$ is expressible as a sum of faithful irreducible characters of the factor group $G/\text{ker}(\chi)$;
 
 (\romannumeral2) The matrix $\rho(g)$ has all its eigenvalues of the same order;
 
 (\romannumeral3) The mapping $\phi$ annihilates every primitive central idempotents of the group algebra $\mathbb{F}[G/\text{ker}(\chi)]$.
\end{proposition}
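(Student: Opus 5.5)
Since the displayed statement is phrased somewhat loosely, I will prove it in the form in which it is actually used, following \cite{Bakshi}. Fix $g\in G$ and write $\bar g$ for its image in $\bar G=G/\text{ker}(\chi)$. Let $C=\langle \bar g\rangle$ and $m=\text{o}(\bar g)$. The three conditions are then read as follows:
(\romannumeral1) $\psi|_C$ is a sum of faithful linear characters of $C$;
(\romannumeral2) all eigenvalues of $\rho(g)$ have the same order;
(\romannumeral3) $\phi$ annihilates every primitive central idempotent $\epsilon_{C'}(C,N)$ of $\mathbb{F}[C]$ with $N\neq 1$, i.e.\ every one not attached to a faithful character of $C$.
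The approach is to restrict to the cyclic group $C$ and diagonalize. All three conditions then become statements about the multiset of linear constituents of $\psi|_C$.

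\textbf{Diagonalization.} Work over a field $\mathbb{K}\supseteq\mathbb{F}(\chi)$ containing a primitive $m$th root of unity $\zeta_m$. Since $(q,|G|)=1$, $\mathbb{K}[C]$ is semisimple by Lemma \ref{L: characteristic}, and it is split because $\zeta_m\in\mathbb{K}$. Hence $\phi(\bar g)$ is diagonalizable:
$$\phi(\bar g)\sim \mathrm{diag}(\lambda_1(\bar g),\dots,\lambda_n(\bar g)),$$
where $\psi|_C=\lambda_1+\dots+\lambda_n$ with $\lambda_i\in C^*$. The eigenvalues of $\rho(g)$ are exactly the $\lambda_i(\bar g)$. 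Each $\lambda_i$ is faithful on $C$ precisely when $\lambda_i(\bar g)$ has order $m$.

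\textbf{(\romannumeral1)$\Leftrightarrow$(\romannumeral2).} The map $\phi$ is faithful on $\bar G$, so $\phi(\bar g)$ has order $m$. In diagonal form this order is $\mathrm{lcm}_i\,\text{o}(\lambda_i(\bar g))$. Therefore all eigenvalue orders are equal if and only if each equals $m$, which holds if and only if every $\lambda_i$ is faithful. This is (\romannumeral1).

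\textbf{(\romannumeral1)$\Leftrightarrow$(\romannumeral3).} By Proposition \ref{P: abelian}, the primitive central idempotents of $\mathbb{F}[C]$ are the $\epsilon_{C'}(C,N)=\sum_{\sigma}\sigma(e(\lambda))$. Here $\lambda$ is any character with kernel $N$ in the cyclotomic class $C'$, and the sum is the Galois orbit from Yamada's formula. Over $\mathbb{K}$, $\phi(e(\lambda))$ is the projection onto the $\lambda$-isotypic component of the representation space. Hence $\phi(\epsilon_{C'}(C,N))$ is the projection onto the sum of the isotypic components of the Galois conjugates of $\lambda$, and all of these have the same kernel $N$. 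Consequently $\phi$ kills every $\epsilon_{C'}(C,N)$ with $N\neq1$ if and only if no $\lambda_i$ has nontrivial kernel, which is (\romannumeral1).

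\textbf{Main obstacle.} I expect the difficulty to be bookkeeping rather than mathematics. First, one must make precise that $\chi$ and its eigenvalues live in $\mathbb{F}(\chi)$ or an extension of it, and use semisimplicity there to get diagonalizability. Second, one must phrase (\romannumeral3) correctly: the idempotents with $N=1$ must be excluded, since otherwise the condition is vacuous or false. I would handle the first point by extending scalars to $\mathbb{F}(\zeta_m)$ and checking that the Galois action in Yamada's formula permutes the $\lambda$'s while preserving their kernels.
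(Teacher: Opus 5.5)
Your proof is correct. Your reading of the loosely worded conditions is exactly what the paper's proof establishes after it reduces to $\ker(\chi)=1$: in (i), the restriction of $\psi$ to $C=\langle \bar g\rangle$, and in (iii), only the idempotents $\epsilon_{C'}(C,N)$ with $N\neq 1$.

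The overall strategy is the same as the paper's: restrict to $\langle \bar g\rangle$, diagonalize, and read everything off the linear constituents of $\psi|_C$. The organization differs.

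The paper runs the cycle (i)$\Rightarrow$(ii)$\Rightarrow$(iii)$\Rightarrow$(i):
\begin{itemize}
\item For (i)$\Rightarrow$(ii) it argues by contradiction, computing $[\chi|_{\langle g\rangle},\gamma]$ as the multiplicity of an eigenvalue of minimal order.
\item For (ii)$\Rightarrow$(iii) it notes that every $\epsilon_C(\langle g\rangle,H)$ with $H\neq 1$ contains the factor $\widehat{H}$. It then shows $\rho(\widehat{H})=0$, since all eigenvalues of $\rho(g^l)$ are nontrivial roots of unity.
\item For (iii)$\Rightarrow$(i) it evaluates $\chi|_{\langle g\rangle}$ on the idempotent and uses $n_\gamma\geq 0$.
\end{itemize}

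You instead prove two biconditionals. You get (i)$\Leftrightarrow$(ii) from $\mathrm{o}(\phi(\bar g))=\mathrm{lcm}_i\,\mathrm{o}(\lambda_i(\bar g))=m$. You get (i)$\Leftrightarrow$(iii) in one stroke from the fact that $\phi(e(\lambda))$ is the projection onto the $\lambda$-isotypic component.

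Each route has an advantage:
\begin{itemize}
\item The paper's $\widehat{H}$ trick makes (ii)$\Rightarrow$(iii) independent of any description of Galois orbits.
\item Your projection argument correctly tracks that $\epsilon_{C'}(C,N)$ detects exactly the Galois class $C'$, not all characters with kernel $N$. The paper blurs this when it asserts $\gamma(e)=1$ iff $\ker\gamma=\langle g^l\rangle$, although its conclusion survives if $C$ is chosen to be the class of $\gamma_0$.
\item You diagonalize over a field $\mathbb{K}\supseteq\mathbb{F}(\chi)$ of the same characteristic as $\mathbb{F}$. This avoids the paper's mismatch of diagonalizing over $\mathbb{C}$ while applying $\rho$ to elements with coefficients in $\mathbb{F}$.
\end{itemize}
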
 
 \begin{proof}
 It is enough to prove the proposition when $\text{ker}(\chi)=1$, and we thus assume this to be the case.\smallskip
 
 {\it(\romannumeral1)}$\implies${\it(\romannumeral2)}: Suppose (\romannumeral1) holds. Let $\zeta_1, \zeta_2,..., \zeta_{\chi(1)}$ be all the eigen-values of $\rho(g)$. Let $t= min(\text{o}(\zeta_1), \text{o}(\zeta_2),...,\text{o}(\zeta_{\chi(1)}))$. Suppose (\romannumeral2) does not hold. Then $t<$ \text{o}($g$). After renumbering, if necessary, suppose \text{o}($\zeta_1$) = $t$. Let $\gamma: \langle g \rangle \rightarrow \mathbb{F}(p\equiv 1(\text{mod n}))$ be the linear character of $\langle g \rangle$ defined by $g\rightarrow\zeta_1$, $\gamma$ is not a faithful character of $\langle g \rangle$ and
 \begin{eqnarray}\nonumber
   [\chi|_{\langle g \rangle},\gamma] &=&  \frac{1}{\text{o}(g)}\sum_{h\in {\langle g \rangle}}\chi(h)\overline{\gamma(h)} \\\nonumber
    &=& \frac{1}{\text{o}(g)}\sum_{h\in {\langle g \rangle}}^{\text{o}(g)-1}(\zeta^i_1+\zeta^i_2+...+\zeta^i_{\chi(1)})\zeta^{-i}_1  \\\nonumber
    &=& \text{the multiplicity of the eigen value $\zeta_1$}\\\nonumber
    &\neq& 0,\nonumber
 \end{eqnarray}
 which shows that $\gamma$ is an irreducible constituent of $\chi|_{\langle g \rangle}$, and we have a contradiction to the hypothesis $(\romannumeral1)$.\smallskip
  
 {\it(\romannumeral2)}$\implies${\it(\romannumeral3)}: Now suppose (\romannumeral2) holds. Then there exists an invertible matrix  $Q\in M_{\chi(1)}(\mathbb{C})$ such that $Q^{-1}\rho(g)Q = diag(\zeta_1, \zeta_2,...,\zeta_{\chi(1)})$, where $\text{o}(\zeta_1) = \text{o}(\zeta_2) = ...= \text{o}(\zeta_{\chi(1)}) = k$, say. We first note that k is equal to the order of g. Next we see that for any non-identity subgroup $H = \langle g^l\rangle$ of $\langle g \rangle$,\\
 \begin{eqnarray}\nonumber
   Q^{-1}\rho(\hat{H})Q &=& \frac{1}{\text{o}(g^l)}\sum_{i=1}^{\text{o}(g^l)-1}diag((\zeta^l_1)^i, (\zeta^l_2)^i, ..., (\zeta^l_{\chi(1)})^i)   \\\nonumber
   &=&\frac{1}{\text{o}(g^l)}diag\bigg(\sum_{i=1}^{\text{o}(g^l)-1}(\zeta^l_1)^i, \sum_{i=1}^{\text{o}(g^l)-1}(\zeta^l_2)^i, ..., \sum_{i=1}^{\text{o}(g^l)-1} (\zeta^l_{\chi(1)})^i\bigg) \\\nonumber
    &=& 0.\nonumber
 \end{eqnarray}
 The primitive central idempotents of $\mathbb{F}[\langle g \rangle]$ are given by $\epsilon_C(G,G)=\hat{G}$, and\smallskip
 
 $\epsilon_C(G, H) = \frac{1}{|G|}\sum_{g\in G}tr(\chi(gH))g^{-1} = [G:H]^{-1}\hat{H}\sum_{X\in G/H}tr(\chi(X))g^{-1}_X,$
 
 where $g_X$ denote a representative of $X\in G/H$. Since $\rho(\hat{H})=0$ for any non-identity subgroup H of $\langle g \rangle$, we have $\rho(\epsilon_C(\langle g \rangle, \langle g \rangle))=0$ and for $l|\text{o}(g), l\neq 1, \text{o}(g),$ $\rho(\epsilon_C(G, H)=0$\smallskip
 
 and consequently (\romannumeral3) holds.\smallskip
 
 {\it(\romannumeral3)}$\implies${\it(\romannumeral1)}: Finally, suppose (\romannumeral3) holds. We can write $\chi_{|_{\langle g \rangle}}=\sum_{\gamma\in Irr(\langle g \rangle)}n_\gamma\gamma,    n_\gamma\in \mathbb{Z}, n_\gamma\geq0$. we need to prove that $n_\gamma=0$, if $\gamma$ is not a faithful character of $\langle g\rangle$. Let $\gamma_0\in Irr(\langle g\rangle)$ be not a faithful character, then $Ker(\gamma_0)=\langle g^l\rangle, l\neq0$, and therefore, by (\romannumeral3), $\rho$ maps $e=\epsilon_C(\langle g\rangle, \langle g^l\rangle)$ to zero. Hence
  \begin{eqnarray}\nonumber
    0 &=& \chi_{|_{\langle g\rangle}}(e)=\sum_{\gamma\in Irr(\langle g\rangle)}n_\gamma\gamma(e).
  \end{eqnarray}
 But for $\gamma\in Irr(\langle g\rangle)$,
 \begin{center}
 \[
\gamma(e) =
\begin{cases}
1, & \text{if $ker(\gamma)=\langle g^l\rangle$ },\\
0, & \text{otherwise}.
\end{cases}
 \]
 \end{center} 
 Therefore, the above equation yields
 \begin{eqnarray}\nonumber
   \sum_{\gamma\in Irr(\langle g\rangle), \text{ker}(\gamma)=\langle g^l\rangle}n_\gamma &=&0
 \end{eqnarray}
Since $n_\gamma\geq0$ for all $\gamma$, we get that $n_\gamma=0$ for all $\gamma\in Irr(\langle g\rangle)$ with $\text{ker}(\gamma)=\langle g^l\rangle$. In particular, $n_{\gamma_0}=0$, which proves (\romannumeral1).
\end{proof} 

\begin{lemma}\label{L: order}
For a finite group $G$ of order $n$, if a character $\chi\in Irr(G)$ and an element $g\in G$ are such that all eigenvalues of $\rho(g)$(in the representation affording $\chi$) are roots of unity of the same order $d$, then
    \begin{center}
       $\sum_{\sigma \in Gal(\frac{\mathbb{F}(\zeta)}{\mathbb{F}})} \sigma(\chi(g)) = \mu(d)\chi(1)\frac{\phi(n)}{\phi(d)},$
    \end{center}
where $d$ is the order of $g$ modulo $ker(\chi)$.
\end{lemma}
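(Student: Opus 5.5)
The plan is to diagonalize $\rho(g)$ and reduce everything to Lemma \ref{L: root}. The statement is ambiguous about $\zeta$ and the Galois group, so I will read it in the sense that makes the formula true. Let $\zeta=\zeta_n$ be a primitive $n$th root of unity, $n=\text{o}(G)$, and sum over $\sigma\in Gal(\mathbb{Q}(\zeta)/\mathbb{Q})\cong(\mathbb{Z}/n\mathbb{Z})^\times$. Then $\sigma=\sigma_i$ acts by $\zeta\mapsto\zeta^i$ with $(i,n)=1$. Over a finite field $\mathbb{F}_q$ the Galois group is only the cyclic subgroup generated by $q$, and the count $\phi(n)/\phi(d)$ would then fail. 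So I will either work with the full group of automorphisms $\zeta\mapsto\zeta^i$ or state explicitly that this is the group intended.

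First, the eigenvalue data. Since $g^n=1$, $\rho(g)$ is diagonalizable with eigenvalues $\zeta_1,\dots,\zeta_{\chi(1)}$, which are $n$th roots of unity. So $\chi(g)=\sum_j\zeta_j$ and $\sigma_i(\chi(g))=\sum_j\zeta_j^i$. By hypothesis every $\zeta_j$ has order exactly $d$. Moreover $\rho(g^d)$ is diagonalizable with all eigenvalues $1$, so $\rho(g^d)=I$ and $g^d\in\ker(\chi)$. No smaller power of $g$ acts trivially, so $d$ is exactly the order of $g$ modulo $\ker(\chi)$, which gives the last clause of the statement.

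Next, swap the sums:
\[
\sum_{\sigma}\sigma(\chi(g))=\sum_{j=1}^{\chi(1)}\ \sum_{1\le i\le n,\,(i,n)=1}\zeta_j^{\,i}.
\]
Fix $j$. Since $\zeta_j^{\,i}$ depends only on $i \bmod d$, I will use the reduction map $(\mathbb{Z}/n\mathbb{Z})^\times\to(\mathbb{Z}/d\mathbb{Z})^\times$, which exists because $d\mid n$. This map is a surjective group homomorphism, so each fibre has size $\phi(n)/\phi(d)$. Hence the inner sum equals $\frac{\phi(n)}{\phi(d)}\sum_{(a,d)=1}\zeta_j^{\,a}$, and by Lemma \ref{L: root} this is $\frac{\phi(n)}{\phi(d)}\mu(d)$. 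Summing over the $\chi(1)$ eigenvalues gives $\mu(d)\chi(1)\phi(n)/\phi(d)$.

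The only step that is not purely mechanical is the surjectivity of $(\mathbb{Z}/n\mathbb{Z})^\times\to(\mathbb{Z}/d\mathbb{Z})^\times$. I will prove it via the Chinese remainder theorem: write $n=n_1n_2$, where $n_1$ has the same prime divisors as $d$ and $(n_2,d)=1$. Given $a$ coprime to $d$, choose $x\equiv a\pmod{n_1}$ and $x\equiv1\pmod{n_2}$. Then $x$ is coprime to $n$ and $x\equiv a\pmod d$, since $d\mid n_1$. Alternatively I could cite this as a standard fact. The remaining care is in fixing the interpretation of the Galois group noted above.
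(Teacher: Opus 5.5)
Your proof is correct and follows the paper's argument. The paper passes through the tower $\mathbb{F}\subset\mathbb{F}(\zeta^{n/d})\subset\mathbb{F}(\zeta)$ and pulls out the factor $[\mathbb{F}(\zeta):\mathbb{F}(\zeta^{n/d})]=\phi(n)/\phi(d)$ before applying Lemma \ref{L: root}, which is the field-theoretic form of your fibre count for $(\mathbb{Z}/n\mathbb{Z})^\times\to(\mathbb{Z}/d\mathbb{Z})^\times$. Your caveat about the Galois group is also justified, since the paper's proof itself uses $[\mathbb{F}(\zeta):\mathbb{F}]=\phi(n)$, which holds over $\mathbb{Q}$ but not over $\mathbb{F}_q$ in general.
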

\begin{proof} 
Since $\rho(g)$ is similar to $diag(\zeta_1, \zeta_2,...,\zeta_{\chi(1)})$, where $\text{o}(\zeta_1)=\text{o}(\zeta_2) =...=  \text{o}(\zeta_{\chi(1)}) = d$. It is easy to see that $d$ is actually the order of $g$ modulo $Ker(\chi)$.\\
If $d=1$, then $\chi(g)=\chi(1)$ and, therefore,
\begin{center}
  $\sum_{\sigma \in Gal(\frac{\mathbb{F}(\zeta)}{\mathbb{F}})} \sigma(\chi(g))=\chi(1)[\mathbb{F}(\zeta):\mathbb{F}]=\chi(1)\phi(n)$.
\end{center}
If $d>1$, then
\begin{eqnarray}\nonumber
  \sum_{\sigma \in Gal(\frac{\mathbb{F}(\zeta)}{\mathbb{F}})} \sigma(\chi(g)) &=& \sum_{\sigma \in Gal(\frac{\mathbb{F}(\zeta)}{\mathbb{F}})}\sigma(\zeta_1 + \zeta_2 + ...+ \zeta_{\chi(1)}) \\\nonumber
   &=& \sum_{i=1}^{\chi(1)}\sum_{\sigma \in Gal(\frac{\mathbb{F}(\zeta)}{\mathbb{F}})} \sigma(\zeta_i) \\\nonumber
   &=& \sum_{i=1}^{\chi(1)}[\mathbb{F}(\zeta):\mathbb{F}(\zeta^{\frac{n}{d}})]\sum_{\sigma \in Gal(\frac{\mathbb{F}(\zeta^{\frac{n}{d}})}{\mathbb{F}})} \sigma(\zeta_i) \\\nonumber
   &=&\sum_{i=1}^{\chi(1)}[\mathbb{F}(\zeta):\mathbb{F}(\zeta^{\frac{n}{d}})]\sum_{1\leq d,(k,d)=1}\zeta^{k}_i  \\\nonumber
   &=& \frac{\phi(n)}{\phi(d)}\sum_{i=1}^{\chi(1)}\sum_{1\leq d,(k,d)=1}\zeta^{k}_i \\\nonumber
   &=& \mu(d)\chi(1)\frac{\phi(n)}{\phi(d)}   \hfill (\text{Using Lemma \ref{L: root}})\nonumber.
\end{eqnarray}
\end{proof}
Now we are ready to state our main results.
\subsection{Main Result}
In this section, we will prove the main result.
\begin{theorem}\label{T: complex}
If $G$ is a finite group and $\chi$ a complex irreducible character of $G$ satisfying property $\varrho$, then the primitive central idempotent $e_{\mathbb{F}}$ of the group algebra $\mathbb{F}[G]$ attached to $\chi$ can be expressed as a weighted sum of group elements:
  \begin{center}
    $e_{\mathbb{F}}= \frac{1}{\sum_{g\in G, \chi(g)\neq 0}(\frac{\mu(d(g))}{\phi(d(g))})^{2}}\sum_{g\in G, \chi(g)\neq 0}\frac{\mu(d(g))}{\phi(d(g))}g,$  
  \end{center}
  where, for $g\in G$, $d(g)$ denotes order of $g$ modulo \text{ker}($\chi$), and $\mu$ and $\phi$ denote, the Mobius mu and the Euler phi functions respectively.
  \end{theorem}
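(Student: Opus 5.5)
We start from Yamada's formula $e_{\mathbb{F}}(\chi)=\sum_{\sigma\in Gal(\mathbb{F}(\chi)/\mathbb{F})}\sigma(e(\chi))$ and expand $e(\chi)=\frac{\chi(1)}{|G|}\sum_{g}\chi(g^{-1})g$. Since the Galois group acts on coefficients, we get
\begin{equation*}
e_{\mathbb{F}}(\chi)=\frac{\chi(1)}{|G|}\sum_{g\in G}\Big(\sum_{\sigma}\sigma(\chi(g^{-1}))\Big)g .
\end{equation*}
It is cleaner to pass to a splitting field $\mathbb{F}(\zeta)$, with $\zeta$ a primitive $n$th root of unity and $n=\text{o}(G)$, and use the Galois group of $\mathbb{F}(\zeta)/\mathbb{F}$. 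This introduces only the constant factor $[\mathbb{F}(\zeta):\mathbb{F}(\chi)]$, which will be absorbed by normalization at the end.

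We then treat the two kinds of group elements separately. If $\chi(g)=0$, then $\chi(g^{-1})=\overline{\chi(g)}=0$, so the coefficient of $g$ vanishes. If $\chi(g)\neq0$, property $\varrho$ says all eigenvalues of $\rho(g)$, and hence of $\rho(g^{-1})$, have the common order $d(g)$, the order of $g$ modulo $\ker(\chi)$. Lemma \ref{L: order} then gives the trace sum as $\mu(d(g))\chi(1)\phi(n)/\phi(d(g))$. Consequently
\begin{equation*}
e_{\mathbb{F}}(\chi)=c\sum_{g\in G,\ \chi(g)\neq0}\frac{\mu(d(g))}{\phi(d(g))}\,g
\end{equation*}
for a nonzero constant $c$ independent of $g$. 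We must check that $c$ is nonzero in $\mathbb{F}$. Each factor is a unit, since $(q,|G|)=1$ and $\chi(1)$ divides $|G|$. One must still confirm that the division by $[\mathbb{F}(\zeta):\mathbb{F}(\chi)]$ is legitimate. Alternatively, one can argue by Lemma \ref{L: root} directly over $\mathbb{F}(\chi)$, where the orbit sum of each eigenvalue is a multiple of $\mu(d)$ and the multiple is uniform in $g$.

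Finally we determine $c$ without tracking the constants: write $e_{\mathbb{F}}=cS$, where $S=\sum_{\chi(g)\neq0}\frac{\mu(d(g))}{\phi(d(g))}g$. Idempotence gives $c^2S^2=cS$, so $cS^2=S$. Compare coefficients of the identity element. In $S^2$ this coefficient is $\sum_{g}a_ga_{g^{-1}}$, where $a_g=\mu(d(g))/\phi(d(g))$. Since $d(g^{-1})=d(g)$ and $\chi(g^{-1})\neq0$ exactly when $\chi(g)\neq0$, we have $a_{g^{-1}}=a_g$. Hence the identity coefficient of $S^2$ is $\sum_{\chi(g)\neq0}a_g^2$, while in $S$ it is $a_1=1$ because $d(1)=1$. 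Therefore $c\sum a_g^2=1$, which gives the stated formula and shows the denominator is invertible in $\mathbb{F}$.

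The main obstacle is the middle step: ensuring Lemma \ref{L: order} applies uniformly. Its Galois group and degree factor must be the same for every $g$ so that $c$ is genuinely constant. One must also handle the subtlety that $\mu(d)/\phi(d)$ has to be read in characteristic $p$, with $\phi(d)$ invertible because $d$ divides $|G|$. I would take $\mathbb{F}(\zeta)$ with $\zeta$ of order $\exp(G)$ as the common field, so that every eigenvalue lies in it, and verify the degree-factor computation there.
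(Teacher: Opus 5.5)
Your outline is the paper's proof step for step. Both use Yamada's formula, pass to $\mathbb{F}(\zeta)$, evaluate the Galois sums with Lemma~\ref{L: order}, and normalize by comparing the coefficient of $1$ in $e_{\mathbb{F}}^2=e_{\mathbb{F}}$. Your justification of that last step via $a_{g^{-1}}=a_g$ is fine.

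\textbf{The gap.} The middle step, which you flagged as the obstacle and then assumed, is false over a finite field. The paper's proof of Lemma~\ref{L: order} goes wrong at the same point. For $\mathbb{F}=\mathbb{F}_q$, the group $Gal(\mathbb{F}(\zeta)/\mathbb{F})$ is cyclic, generated by $x\mapsto x^q$, and has order $\mathrm{ord}_n(q)$ rather than $\phi(n)$. The orbit of a primitive $d$-th root of unity $\xi$ is $\{\xi^{q^j}\}$. This is the set of all primitive $d$-th roots only when $q$ generates $(\mathbb{Z}/d\mathbb{Z})^{\times}$. So Lemma~\ref{L: root} does not apply, the Galois sum of $\chi(g)$ is in general not a function of $d(g)$, and there is no constant $c$ with $e_{\mathbb{F}}(\chi)=cS$.

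\textbf{Counterexample.} Let $G=\langle a\rangle$ have order $3$, $\mathbb{F}=\mathbb{F}_7$, and $\chi(a)=\omega=2$. Then $\mathbb{F}(\chi)=\mathbb{F}$, so $e_{\mathbb{F}}(\chi)=e(\chi)=\frac13(1+4a+2a^2)$. The stated formula instead gives $\frac13(2-a-a^2)=e(\chi)+e(\overline{\chi})$, which is not primitive. So the statement cannot be proved as stated. It needs a hypothesis such as ``the Frobenius orbit of $\chi$ is its whole rational Galois orbit'' (for instance, $q$ generates $(\mathbb{Z}/m\mathbb{Z})^{\times}$ with $m$ the exponent of $G/\ker(\chi)$). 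Under that hypothesis, $e_{\mathbb{F}}(\chi)$ is the reduction modulo $p$ of the rational idempotent $e_{\mathbb{Q}}(\chi)\in\mathbb{Z}[1/|G|][G]$. The formula then follows from the characteristic-zero computation of Bakshi and Passi.

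\textbf{Your side worries.} Both are real, and both should be handled through that rational reduction rather than by computing inside $\mathbb{F}$. Take $G$ cyclic of order $7$ and $\mathbb{F}=\mathbb{F}_3$ for both.

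First, the degree $[\mathbb{F}(\zeta):\mathbb{F}(\chi)]$ can be divisible by $p$. For trivial $\chi$ it equals $6$, so the trace from $\mathbb{F}(\zeta)$ sends $e(\chi)$ to $6e(\chi)=0$, and dividing recovers nothing.

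Second, $\phi(d)$ need not be a unit in $\mathbb{F}$ just because $d$ divides $|G|$. Here $\phi(7)=6=0$ in $\mathbb{F}_3$, so for faithful $\chi$ the individual coefficients $\mu(7)/\phi(7)$ are undefined. The normalized rational coefficients, which give $1-\widehat{G}$, still reduce correctly. The formula must therefore be read as a rational expression reduced modulo $p$.
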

\begin{proof}
Let $\zeta$ be a primitive $\text{o}(G)th$ root of unity. For computing the primitive central idempotents of $\mathbb{F}[G]$ associated with the character $\chi$ afforded by the representation $\rho(g)$, we have
\begin{eqnarray}\nonumber
  e_{\mathbb{F}}(\chi) &=& \sum_{\sigma \in Gal(\frac{\mathbb{F}(\chi)}{\mathbb{F}})}\sigma \circ e(\chi) \\\nonumber
   &=& \frac{\chi(1)}{|G|}\sum_{\sigma \in Gal(\frac{\mathbb{F}(\chi)}{\mathbb{F}})}\sigma\bigg(\sum_{g\in G}\chi(g)g^{-1}\bigg) \\\nonumber
   &=&  \frac{\chi(1)}{|G|}\sum_{\sigma \in Gal(\frac{\mathbb{F}(\chi)}{\mathbb{F}})}\sum_{g\in G}\sigma(\chi(g))g^{-1} \\\nonumber
   &=&\frac{\chi(1)}{|G|}\sum_{g\in G}\bigg(\sum_{\sigma \in Gal(\frac{\mathbb{F}(\chi)}{\mathbb{F}})}\sigma(\chi(g))\bigg)g^{-1}  \\\nonumber
   &=&\frac{\chi(1)}{|G|}\sum_{g\in G}\frac{1}{[\mathbb{F}(\zeta):\mathbb{F}(\chi)]}\bigg(\sum_{\sigma \in Gal(\frac{\mathbb{F}(\zeta)}{\mathbb{F}})}\sigma(\chi(g))\bigg)g^{-1}  \\\nonumber
   &=& \frac{\chi(1)}{|G|}\frac{1}{[\mathbb{F}(\zeta):\mathbb{F}(\chi)]}\sum_{g\in G}\mu(d(g))\chi(1)\frac{\phi(n)}{\phi(d(g))}g^{-1}\ \ (\text{Using Lemma \ref{L: order}}) \\
   &=&  \frac{(\chi(1))^2}{|G|}\frac{\phi(n)}{[\mathbb{F}(\zeta):\mathbb{F}(\chi)]}\sum_{g\in G}\frac{\mu(d(g))}{\phi(d(g))}g
\end{eqnarray}
as $\chi(g)\neq 0\Leftrightarrow \chi(g^{-1})\neq 0$ and $d(g) = d(g^{-1})$.\\\\
Since $(e_{\mathbb{F}}(\chi))^2=e_{\mathbb{F}}(\chi)$, comparing the coefficient of $1$ on both sides of  
\begin{center}
  $\bigg(\frac{(\chi(1))^2}{|G|}\frac{\phi(n)}{[\mathbb{F}(\zeta):\mathbb{F}(\chi)]}\sum_{g\in G}\frac{\mu(d(g))}{\phi(d(g))}g\bigg)^2 =  \frac{(\chi(1))^2}{|G|}\frac{\phi(n)}{[\mathbb{F}(\zeta):\mathbb{F}(\chi)]}\sum_{g\in G}\frac{\mu(d(g))}{\phi(d(g))}g$,
\end{center}
we get
\begin{center}
  $\frac{(\chi(1))^2}{|G|}\frac{\phi(n)}{[\mathbb{F}(\zeta):\mathbb{F}(\chi)]} = \frac{1}{\sum_{g\in G}\bigg(\frac{\mu(d(g))}{\phi(d(g))}\bigg)^2}.$
\end{center}
Substituting this in (3.1), we get the required expression of $e_\mathbb{F}(\chi)$.
\end{proof}\smallskip

For $\chi\in Irr(G)$, as $Z(\chi)/\text{ker}(\chi)=Z(G/\text{ker}(\chi))$, It is known that $Z(G/\text{ker}(\chi))$ is cyclic and $\chi(1)^2\leq[G:Z(\chi)]$ (see \cite[Corollary 2.30]{Isaacs}). The primitive central idempotent $e_\mathbb{F}(\chi)$ of $\mathbb{F}[G]$ satisfies $\chi(1)^2=[G:Z(\chi)]$, which has already been proved for rational group algebra (see \cite{Bakshi}).
\begin{corollary}\label{C:irreducible}
  Let $\chi$ be an irreducible character corresponding to the representation $\rho$ of the group $G$ thus the degree of the irreducible character $\chi$ is $\sqrt{[G:Z(\chi)]}$. Then
  \begin{center}
    $e_{\mathbb{F}}(\chi)=E_{\text{ker}(\chi),Z(\chi)}$.
  \end{center}\smallskip
  
  Additionally, if $\text{o}(Z(\chi)/\text{ker}(\chi))=p^m$ for some $m\geq1$, then
  \begin{center}
    $e_{\mathbb{F}}(\chi)=\widehat{\text{ker}(\chi)}-\widehat{H}$, 
  \end{center}
where, $H/\text{ker}(\chi)$ is the unique subgroup of $Z(G/\text{ker}(\chi))$ of order $p$.  
\end{corollary}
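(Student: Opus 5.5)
The plan is to reduce the corollary to Theorem \ref{T: complex}. For this we need two facts: that $\chi$ has property $\varrho$, and that the set $\{g:\chi(g)\neq 0\}$ is exactly $Z(\chi)$. Throughout, write $N=\text{ker}(\chi)$ and $Z=Z(\chi)$.

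\textbf{Step 1: the support of $\chi$.} By the second orthogonality relation/\cite[Corollary 2.30]{Isaacs} we have $\chi(1)^2\le [G:Z]$. We also have $|\chi(g)|=\chi(1)$ on $Z$, so
\[
\sum_{g\in Z}|\chi(g)|^2=|Z|\chi(1)^2=|G| .
\]
Comparing with $[\chi,\chi]=1$, i.e. $\sum_{g\in G}|\chi(g)|^2=|G|$, forces $\chi(g)=0$ for every $g\notin Z$.

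\textbf{Step 2: property $\varrho$ and the value of $d(g)$.} For $g\in Z$, the element $\rho(g)$ is a scalar $\lambda I$, since $gN$ is central in $G/N$ and $\rho$ is irreducible. So all its eigenvalues coincide and property $\varrho$ holds. Moreover $d(g)=\text{o}(gN)$ inside the cyclic group $Z/N$.

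\textbf{Step 3: apply Theorem \ref{T: complex}.} The theorem gives $e_{\mathbb F}(\chi)=c\sum_{g\in Z}\frac{\mu(d(g))}{\phi(d(g))}g$. Since $\mu(d)=0$ unless $d$ is squarefree, only $g$ whose image lies in the subgroup $K/N$ of $Z/N$ of order $p_1\cdots p_n$ contribute. Here $|Z/N|=p_1^{\alpha_1}\cdots p_n^{\alpha_n}$, and $K/N$ is unique because $Z/N$ is cyclic.

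It remains to identify the constant $c$, i.e. to evaluate $\sum_{g\in K}(\mu(d(g))/\phi(d(g)))^2$. Each coset of $N$ contributes $|N|$ terms. In the cyclic group $K/N$ there are $\phi(d)$ elements of order $d$ for each squarefree $d\mid p_1\cdots p_n$. So the sum is
\[
|N|\sum_{d}\frac{1}{\phi(d)}=|N|\prod_i\Bigl(1+\frac{1}{p_i-1}\Bigr)=|N|\prod_i\frac{p_i}{p_i-1}.
\]
Inverting this gives exactly the prefactor in $E_{N,Z}$ with $H=Z$. When $Z=N$ the sum collapses to $\hat N=E_{N,N}$.

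\textbf{Step 4: the $p$-power case.} If $|Z/N|=p^m$, then $K=H$ with $|H/N|=p$. The elements of $N$ have $d=1$ and coefficient $1$. The elements of $H\setminus N$ have $d=p$ and coefficient $-1/(p-1)$. The prefactor is $(p-1)/(|N|p)$, so the expression becomes
\[
\frac{p-1}{p|N|}\sum_{N}g-\frac{1}{p|N|}\sum_{H\setminus N}g=\frac{1}{|N|}\sum_N g-\frac{1}{p|N|}\sum_H g=\widehat N-\widehat H .
\]

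\textbf{Main obstacle.} The delicate point is Step 1, the vanishing off $Z$, which needs the complex character norm identity. It must be checked that this identity is being used over $\mathbb C$ and is compatible with viewing values in $\mathbb F(\chi)$ as the paper does. Everything after that is routine counting in a cyclic group.
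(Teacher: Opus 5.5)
Your reduction follows the paper's proof step for step. You show that $\chi$ vanishes off $Z(\chi)$ and acts by scalars on it with $Z(\chi)/\ker(\chi)$ cyclic, then apply Theorem~\ref{T: complex}, and finally count $\phi(d)$ elements of each squarefree order $d$ in $K/N$. Two small differences: you prove the vanishing via $|Z|\chi(1)^2=|G|=\sum_{g\in G}|\chi(g)|^2$ where the paper cites Isaacs. And your Step 4 arithmetic is correct, whereas the paper's last line has a slip: $\frac{p-1}{\text{o}(\ker(\chi))}$ should read $\frac{1}{\text{o}(\ker(\chi))}$.

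There is, however, a genuine gap that you inherit from the paper, and your ``main obstacle'' paragraph puts it in the wrong place. Step 1 is a statement about the complex character. It transfers to $\mathbb{F}(\chi)$ because $\chi(1)$ divides $|G|$, hence is invertible in $\mathbb{F}$, so the support does not change.

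What fails is Step 3: Theorem~\ref{T: complex} is not valid over a general $\mathbb{F}_q$. Its proof goes through Lemma~\ref{L: order}, which assumes $[\mathbb{F}(\zeta):\mathbb{F}]=\phi(n)$ and that the Galois group permutes all primitive $d$-th roots of unity transitively. Over $\mathbb{F}_q$ that group is generated by $x\mapsto x^q$, and neither assumption holds in general.

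The corollary is in fact false as stated. Take $G=\langle a\rangle\cong C_7$, $\mathbb{F}=\mathbb{F}_2$ and $\chi$ faithful linear. The hypotheses hold with $Z(\chi)=G$, $\ker(\chi)=1$, $p=7$, and the statement predicts $e_{\mathbb{F}}(\chi)=1-\widehat{G}=a+a^2+\cdots+a^6$. But over $\mathbb{F}_2$ we have $x^7-1=(x-1)(x^3+x+1)(x^3+x^2+1)$. So $1-\widehat{G}$ is the sum of the two primitive central idempotents $1+a+a^2+a^4$ and $1+a^3+a^5+a^6$, and is not itself primitive. This also conflicts with Proposition~\ref{P: abelian}, where the idempotent depends on the cyclotomic class $C$ and not only on $N$.

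The missing ingredient is an extra hypothesis. Let $r=|Z(\chi)/\ker(\chi)|$ and assume $q$ generates $(\mathbb{Z}/r\mathbb{Z})^\times$; equivalently, the $\mathbb{F}_q$-conjugates of $\chi$ are all of its $\mathbb{Q}$-conjugates. Under this assumption $\sum_{\sigma}\sigma(\chi(g))=\phi(r)\chi(1)\mu(d(g))/\phi(d(g))$ for $g\in Z(\chi)$. Then $e_{\mathbb{F}}(\chi)$ is the reduction of the Bakshi--Passi idempotent $e_{\mathbb{Q}}(\chi)$, and your Steps 1--4 go through unchanged.
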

\begin{proof}
If $\text{ker}(\chi)=G$, then we have $e_{\mathbb{F}}(\chi)=\widehat{G}=E_{G,G}$. If $\text{ker}(\chi)\neq G$, then for $\rho$ the representation affording $\chi$, by Issacs \cite{Isaacs}, $\chi$ annihilates $G/Z(\chi)$, $\rho(g)$ is a scalar matrix $\forall g\in Z(\chi)$ and $Z(\chi)/\text{ker}(\chi)$ is cyclic. Therefore, $\chi$ has the property $\varrho$, and hence by Theorem 3.3, we have
\begin{center}
$e_{\mathbb{F}}(\chi)= \frac{1}{\sum_{g\in Z(\chi)}(\frac{\mu(d(g))}{\phi(d(g))})^{2}}\sum_{g\in Z(\chi)}\frac{\mu(d(g))}{\phi(d(g))}g$.  
\end{center}\smallskip

In case $Z(\chi)=\text{ker}(\chi)$, then the right-hand side of the above equation is equal to $\widehat{\text{ker}(\chi)}=E_{\text{ker}(\chi),\text{ker}(\chi)}$.\smallskip

Suppose $Z(\chi)\neq \text{ker}(\chi)$. Let $\text{o}(Z(\chi)/\text{ker}(\chi))=p^{r_1}_1p^{r_2}_2...p^{r_n}_n$, $p_i's$ distinct primes $\&$ $ r_i's\geq 1$. Let $H/\text{ker}(\chi)$ be a subgroup of $Z(\chi)/\text{ker}(\chi)$ of order $p_1p_2...p_n$. As $\mu(d(g))=0$ for $g\in Z(\chi)/H$, the right-hand side of the above equation in this case is equal to $\frac{1}{\sum_{g\in H}(\frac{\mu(d(g))}{\phi(d(g))})^{2}}\sum_{g\in H}\frac{\mu(d(g))}{\phi(d(g))}g$. Further , we see that $\sum_{g\in H}(\frac{\mu(d(g))}{\phi(d(g))})^{2}$ equals
\begin{center}
 $\sum_{d|p_1p_2...p_n}\frac{\text{o}(\text{ker}(\chi))\phi(d)}{\phi(d)^2}=\text{o}(\text{ker}(\chi))\sum_{d|p_1p_2...p_n}\frac{1}{\phi(d)}=\frac{\text{o}(\text{ker}(\chi))p_1p_2...p_n}{(p_1-1)(p_2-1)...(p_n-1)}$, 
\end{center}
which gives the desired expression for $e_{\mathbb{F}}(\chi)$.\smallskip

Now suppose, in addition, that $\text{o}(Z(\chi)/\text{ker}(\chi))=p^m$, $p$ a prime and $m\geq 1$. Since $d(g)=1$ for $g\in \text{ker}(\chi)$ and $d(g)=P$ for $g\in H/\text{ker}(\chi)$, it follows from above that
\begin{eqnarray}\nonumber
  e_{\mathbb{F}}(\chi) &=& \frac{p-1}{\text{o}(\text{ker}(\chi))p}\bigg(\sum_{g\in \text{ker}(\chi)}g-\frac{1}{p-1}\sum_{g\in H-\text{ker}(\chi)}g\bigg) \\\nonumber
   &=& \frac{p-1}{\text{o}(\text{ker}(\chi))p}\bigg(\frac{p}{p-1}\sum_{g\in \text{ker}(\chi)}g-\frac{1}{p-1}\sum_{g\in H}g\bigg) \\\nonumber
   &=& \frac{p-1}{\text{o}(\text{ker}(\chi))}\sum_{g\in \text{ker}(\chi)}g-\frac{1}{|H|}\sum_{g\in H}g\nonumber,
\end{eqnarray}
and the corollary is proved.
\end{proof}
\begin{corollary}
  Let $\chi\in Irr(G)$ with $\chi(1)^2=[G:Z(G)]$. Then 
  \begin{center}
    $e_{\mathbb{F}}=E_{\text{ker}(\chi),Z(\chi)}$.
  \end{center}
\end{corollary}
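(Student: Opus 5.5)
The plan is to reduce this corollary to Corollary \ref{C:irreducible} by showing that the hypothesis $\chi(1)^2=[G:Z(G)]$ forces $\chi(1)^2=[G:Z(\chi)]$. Once that is established, Corollary \ref{C:irreducible} applies verbatim and gives $e_{\mathbb{F}}(\chi)=E_{\text{ker}(\chi),Z(\chi)}$, which is exactly the claim.

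The first step is to compare $Z(G)$ and $Z(\chi)$. Every $z\in Z(G)$ acts by a scalar under the irreducible representation $\rho$ (Schur's lemma), so $|\chi(z)|=\chi(1)$. Hence $z\in Z(\chi)=\{g\in G: |\chi(g)|=\chi(1)\}$, and therefore $Z(G)\le Z(\chi)$. This gives $[G:Z(\chi)]\le [G:Z(G)]$.

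The second step uses the inequality quoted just before Corollary \ref{C:irreducible} (from \cite[Corollary 2.30]{Isaacs}), namely $\chi(1)^2\le [G:Z(\chi)]$. Combined with the hypothesis, this yields
\[
[G:Z(G)]=\chi(1)^2\le [G:Z(\chi)]\le [G:Z(G)].
\]
So all these quantities are equal. In particular $\chi(1)=\sqrt{[G:Z(\chi)]}$, and incidentally $Z(\chi)=Z(G)$.

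Finally, with this equality in hand, Corollary \ref{C:irreducible} gives $e_{\mathbb{F}}(\chi)=E_{\text{ker}(\chi),Z(\chi)}$. No step here is a real obstacle. The only point needing care is the containment $Z(G)\le Z(\chi)$, where one uses the definition of $Z(\chi)$ via $|\chi(g)|=\chi(1)$, equivalently that $\rho(g)$ is scalar, together with Schur's lemma. Since the paper defines $Z(\chi)$ only through $Z(\chi)/\text{ker}(\chi)=Z(G/\text{ker}(\chi))$, I would alternatively verify directly that the image of $Z(G)$ in $G/\text{ker}(\chi)$ is central, which is immediate.
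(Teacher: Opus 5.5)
Your reduction is the same as the paper's. $Z(G)\le Z(\chi)$ together with $\chi(1)^2\le[G:Z(\chi)]$ gives $[G:Z(G)]=\chi(1)^2\le[G:Z(\chi)]\le[G:Z(G)]$, hence $\chi(1)^2=[G:Z(\chi)]$ and $Z(\chi)=Z(G)$, and then Corollary~\ref{C:irreducible} applies. Your chain is the correct form of the paper's, which misprints the hypothesis as $\chi(1)^2=[G:Z(\chi)]$.

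However, the last step inherits a defect of Corollary~\ref{C:irreducible}. So neither your argument nor the paper's proves the statement for an arbitrary $\mathbb{F}_q$.

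\textbf{Where the error comes from.} That corollary rests on Theorem~\ref{T: complex} and hence on Lemma~\ref{L: order}. The proof of that lemma uses $[\mathbb{F}(\zeta):\mathbb{F}]=\phi(n)$ and replaces $\sum_{\sigma}\sigma(\zeta_i)$ by the full sum $\sum_{(k,d)=1}\zeta_i^k$. In other words, it treats the Galois group as all of $(\mathbb{Z}/n\mathbb{Z})^{\times}$. Over $\mathbb{F}_q$ the Galois group is generated by $\zeta\mapsto\zeta^q$, and its order is the multiplicative order of $q$ modulo $n$. This is strictly smaller than $\phi(n)$ unless $q$ generates $(\mathbb{Z}/n\mathbb{Z})^{\times}$.

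\textbf{A counterexample to the statement.} Take $G=\langle a\rangle\cong C_3$, $\mathbb{F}=\mathbb{F}_7$, and $\chi$ faithful linear. Then $\chi(1)^2=1=[G:Z(G)]$, $\ker(\chi)=1$ and $Z(\chi)=G$. The formula gives $E_{1,G}=\frac{1}{3}(2-a-a^2)=1-\widehat{G}$. But $7\equiv 1\pmod 3$, so $\mathbb{F}_7[G]\cong\mathbb{F}_7^3$, as Proposition~\ref{P: abelian} also shows. Hence $1-\widehat{G}$ is the sum of the two primitive central idempotents belonging to the two faithful characters, and it is not primitive.

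\textbf{What is actually true.} Under $\chi(1)^2=[G:Z(\chi)]$ one has $E_{\ker(\chi),Z(\chi)}=\sum_{\tau\in\mathrm{Gal}(\mathbb{Q}(\chi)/\mathbb{Q})}\tau(e(\chi))$, which is the rational primitive central idempotent. It equals $e_{\mathbb{F}}(\chi)$ precisely when $q$ generates $(\mathbb{Z}/m\mathbb{Z})^{\times}$, where $m=[Z(\chi):\ker(\chi)]$. If you add that hypothesis, and carry it into Corollary~\ref{C:irreducible} as well, your squeeze argument goes through unchanged.
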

\begin{proof}
  Clearly, $Z(G)\subseteq Z(\chi)$. So $\chi(1)^2=[G:Z(\chi)]\geq[G:Z(\chi)]$. But $\chi(1)^2\leq[G:Z(\chi)]$. Therefore, $Z(\chi)=Z(G)$. The result thus follows from corollary 1.
\end{proof}
\begin{corollary}\label{I:idempotent}
If $\chi\in Irr(G)$ be such that $G/Z(\chi)$ is abelian, then $E_{\text{ker},Z(\chi)}$ is the primitive central idempotent of $\mathbb{F}[G]$ associated with $\chi$.
\end{corollary}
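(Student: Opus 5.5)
The plan is to reduce the statement to Corollary \ref{C:irreducible}. It suffices to show that whenever $G/Z(\chi)$ is abelian, $\chi(1)^2=[G:Z(\chi)]$. Corollary \ref{C:irreducible} then gives $e_{\mathbb{F}}(\chi)=E_{\text{ker}(\chi),Z(\chi)}$ directly.

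First we pass to $\bar G=G/\text{ker}(\chi)$ and regard $\chi$ as a faithful irreducible character of $\bar G$. Since $Z(\chi)/\text{ker}(\chi)=Z(\bar G)$, the hypothesis says that $\bar G/Z(\bar G)$ is abelian, so $\bar G$ is nilpotent of class at most $2$. Write $Z=Z(\bar G)$.

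The key step is to show that $\chi$ vanishes on $\bar G\setminus Z$. Take $g\notin Z$. Then there is $h\in\bar G$ with $c=[g,h]\neq 1$. Because the class is at most $2$, $c\in Z$. Since $\chi$ is faithful, $\rho(c)=\lambda(c)I$ with $\lambda(c)\neq 1$. From $hgh^{-1}=gc^{\pm1}$ we get $\chi(g)=\chi(hgh^{-1})=\lambda(c)^{\pm1}\chi(g)$, and hence $\chi(g)=0$.

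Next we compute the norm of $\chi$. For $z\in Z$ we have $|\chi(z)|^2=\chi(1)^2$, so
$$1=[\chi,\chi]=\frac{1}{|\bar G|}\sum_{z\in Z}\chi(1)^2=\frac{\chi(1)^2}{[\bar G:Z]}.$$
This gives $\chi(1)^2=[\bar G:Z]=[G:Z(\chi)]$.

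Finally we apply Corollary \ref{C:irreducible}. The case $\text{ker}(\chi)=G$ is trivial and is already handled there.

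The main obstacle is the vanishing argument. It relies on faithfulness: a nontrivial central commutator must act by a scalar different from $1$. That step needs the reduction to $\bar G$ and the identification $Z(\bar G)=Z(\chi)/\text{ker}(\chi)$ quoted from \cite{Isaacs} before Corollary \ref{C:irreducible}.
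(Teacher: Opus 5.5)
Your proposal is correct relative to the paper's earlier results and follows the paper's own route. The paper's proof just cites Isaacs for the fact that $G/Z(\chi)$ abelian forces $\chi(1)^2=[G:Z(\chi)]$ and then appeals to the earlier corollary; your vanishing argument via a nontrivial central commutator, followed by the norm count, is the standard proof of that cited fact, after which you apply Corollary~\ref{C:irreducible} directly.

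One caveat applies equally to the paper's proof: the conclusion is only as good as Corollary~\ref{C:irreducible}, which rests on Lemma~\ref{L: order} and so tacitly assumes $[\mathbb{F}(\zeta):\mathbb{F}]=\phi(n)$, i.e.\ that $\mathrm{Gal}(\mathbb{F}(\zeta)/\mathbb{F})$ is all of $(\mathbb{Z}/n\mathbb{Z})^{*}$. For instance, take $G=\langle a\rangle\cong C_3$, $\mathbb{F}=\mathbb{F}_7$ and $\chi$ nontrivial. Then $G/Z(\chi)=1$, yet $E_{1,G}=\tfrac13(2-a-a^2)=1-\widehat{G}$ is the sum of two primitive idempotents, because $x^3-1$ splits over $\mathbb{F}_7$.
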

\begin{proof}
The assertion follows from \cite{Isaacs} and the preceding corollary.
\end{proof} 
\section{Applications}
In this section, we apply the results from the previous section to explicitly write primitive central idempotents in the finite group algebra of a CM$_{p-1}$-group, a nilpotent group of class $\leq$2. we have give an example, and discuss some consequences of the main results.
\begin{example}
  Let $G$ be a cyclic group of order 3, $G=\langle a\rangle=\{1,a,a^2\}$, and let $\mathbb{F}=\mathbb{F}_{5}$. Consider the complex irreducible characters of $G$:
\begin{center}
  $\chi_{1},\chi_{2},\chi_{3}:G\rightarrow\mathbb{F}(\omega)$, $\omega=e^{2\pi i/3}$
\end{center}  
 where $\chi_{1}$ is the trivial character and $\chi_{2},\chi_{3}$ are the nontrivial characters defined by
 \begin{center}
   $\chi_{2}(a)=\omega, \chi_{3}(a)=\omega^2$.
 \end{center}
 For the nontrivial character $\chi_{2}$, the kernel is trivial i.e. $\text{ker}(\chi_{2})=\{1\}$.\\
 Hence, for each $g\in G$, if we define
 \begin{center}
 $d(g)$ = order of $g$ modulo $\text{ker}(\chi_{2})$ = order of $g$ in $G$, then we have
 \end{center} 
 \begin{center}
   $d(1)=1, d(a)=3$, and $d(a^2)=3$.
 \end{center}
 Hence,
 \begin{center}
 $\sum_{g\in G, \chi(g)\neq 0}\frac{\mu(d(g))}{\phi(d(g))}g = 1-\frac{1}{2}a-\frac{1}{2}a^2$, and $\sum_{g\in G, \chi(g)\neq 0}(\frac{\mu(d(g))}{\phi(d(g))})^2= 1^2+(\frac{-1}{2})^2+(\frac{-1}{2})^2=\frac{3}{2}$,
 \end{center}
 so, we have
 \begin{center}
   $e_{\mathbb{F}}(\chi_{2}) = \frac{1}{3}(2-a-a^2)$.
 \end{center}
 Similarly, for the trivial character $\chi_{1}$ and other nontrivial character $\chi_{3}$, one obtains:
 \begin{center}
   $e_{\mathbb{F}}(\chi_{1}) = \frac{1}{3}(1+a+a^2),  e_{\mathbb{F}}(\chi_{3}) = \frac{1}{3}(2-a^2-a)$. 
 \end{center}
\end{example}
\subsection{CM$_{p-1}$-Groups}\smallskip

Recall that a $p$-group $G$ is called a CM$_{p-1}$-group if every normal subgroup $N$ of $G$ with $Z(G/N)$ cyclic is the kernel of exactly $p-1$ irreducible characters of $G$. It is known that every non principal irreducible character of a CM$_{p-1}$-group $G$ satisfies the hypothesis of Corollary \ref{C:irreducible}. Thus as an immediate consequence of Corollary \ref{C:irreducible}, we have the following theorem.
\begin{theorem}
If $G$ is a CM$_{p-1}$-group, then all primitive central idempotents of $\mathbb{F}[G]$ are given by $\widehat{G}$ and $\widehat{N}-\widehat{H}$, where $N$ runs over all proper normal subgroups of $G$ with $Z(G/N)$ cyclic and for every $H/N$ as the choice of $N$, we have unique subgroup of $Z(G/N)$ of order $p$.
\end{theorem}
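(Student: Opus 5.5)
The theorem follows by combining the fact that every non-principal irreducible character of a CM$_{p-1}$-group has degree $\sqrt{[G:Z(\chi)]}$ with the second part of Corollary \ref{C:irreducible}, and then counting to see that the list is complete and without repetition. Since $G$ is a $p$-group and $\gcd(q,|G|)=1$, $\mathbb{F}[G]$ is semisimple by Lemma \ref{L: characteristic}. Hence every primitive central idempotent equals $e_{\mathbb{F}}(\chi)$ for some $\chi\in Irr(G)$.

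For the principal character we have $\ker(\chi)=G$, and Corollary \ref{C:irreducible} gives $e_{\mathbb{F}}(\chi)=\widehat{G}$.

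Now let $\chi$ be non-principal and set $N=\ker(\chi)$. By the cited property of CM$_{p-1}$-groups, $\chi(1)^2=[G:Z(\chi)]$. Then:
\begin{enumerate}[label=(\roman*)]
\item $Z(\chi)/N=Z(G/N)$ is cyclic (Isaacs), and it is a nontrivial $p$-group, of order $p^m$ with $m\ge 1$, because $N\ne G$ and the centre of a nontrivial $p$-group is nontrivial.
\item The second statement of Corollary \ref{C:irreducible} therefore applies and gives $e_{\mathbb{F}}(\chi)=\widehat{N}-\widehat{H}$, where $H/N$ is the unique subgroup of order $p$ of the cyclic group $Z(G/N)$.
\end{enumerate}
This shows that every primitive central idempotent has one of the stated forms.

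For the converse, let $N$ be a proper normal subgroup with $Z(G/N)$ cyclic. By the definition of a CM$_{p-1}$-group, $N$ is the kernel of exactly $p-1$ irreducible characters, so some $\chi$ with $\ker\chi=N$ exists. Its idempotent is $\widehat N-\widehat H$, so each listed element really occurs. Distinct $N$ give distinct elements: the idempotent $\widehat N-\widehat H$ determines $N$, for instance because its support is exactly $N\cup(H\setminus N)$ with coefficients distinguishing $N$. Distinct primitive central idempotents are also orthogonal, which gives a second way to separate them.

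The main obstacle is that the idempotent depends only on $\ker(\chi)$. Consequently the $p-1$ characters sharing a kernel $N$ must all lie in a single Galois orbit over $\mathbb{F}$, i.e.\ give one $\mathbb{F}$-primitive central idempotent. This is consistent with the formula, but it should be checked independently for the list to be bijective rather than merely a surjection onto the set of idempotents. I would verify it by observing that these characters restricted to $Z(\chi)$ are $\chi(1)$ times the faithful linear characters of $Z(G/N)$, which are all Galois conjugate. Thus the whole argument reduces to quoting Corollary \ref{C:irreducible} together with these bookkeeping checks.
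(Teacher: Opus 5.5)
Your route is the paper's: there the theorem is presented as an immediate consequence of the second part of Corollary \ref{C:irreducible}, applied to every non-principal $\chi$. The problem lies in the check you single out as the main obstacle. It fails over a finite field, and its failure shows that the statement is false in general for $\mathbb{F}=\mathbb{F}_q$. So is the part of Corollary \ref{C:irreducible} that you and the paper both quote.

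In a CM$_{p-1}$-group one has $|Z(G/N)|=p$ for every proper $N$ with $Z(G/N)$ cyclic. Indeed, each faithful linear character of the cyclic $p$-group $Z(G/N)$ lies under some irreducible character of $G/N$. That character is faithful, because its kernel is normal and meets the centre trivially. Hence $p-1\ge\phi(|Z(G/N)|)$. So the $p-1$ characters with kernel $N$ are parametrised by the $p-1$ faithful linear characters $\lambda$ of $Z(G/N)\cong C_p$. But $\mathrm{Gal}(\mathbb{F}_q(\zeta_p)/\mathbb{F}_q)$ is generated by $\zeta\mapsto\zeta^q$ and has order $\mathrm{ord}_p(q)$. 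These $\lambda$, and hence these $\chi$, therefore form $(p-1)/\mathrm{ord}_p(q)$ Galois orbits, and $\widehat N-\widehat H$ is the sum of that many primitive central idempotents. Your claim that the faithful $\lambda$ are all Galois conjugate holds over $\mathbb{Q}$ (the Bakshi--Passi setting), not over $\mathbb{F}_q$.

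For a concrete counterexample, take $G=C_3=\langle a\rangle$. It is a CM$_2$-group (reading the definition for proper $N$, as you do), and the list in the theorem is $\{\widehat G,\,1-\widehat G\}$. Since $2$ is a primitive cube root of unity in $\mathbb{F}_7$, we have $\mathbb{F}_7[C_3]\cong\mathbb{F}_7^3$. Then $1-\widehat G=(5+6a+3a^2)+(5+3a+6a^2)$ is a sum of two primitive central idempotents. This is exactly what Proposition \ref{P: abelian} predicts, since the $7$-cyclotomic classes of generators of $C_3^*$ are singletons.

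The error originates in Lemma \ref{L: order}. It replaces $\sum_\sigma\sigma(\zeta_i)$ by a multiple of $\sum_{(k,d)=1}\zeta_i^k=\mu(d)$, which presumes $\mathrm{Gal}(\mathbb{F}(\zeta_d)/\mathbb{F})\cong(\mathbb{Z}/d\mathbb{Z})^{\times}$. Over $\mathbb{F}_q$ one only gets a multiple of the trace $\sum_j\zeta_i^{q^j}$.

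What can be proved is the theorem under the extra hypothesis that $q$ is a primitive root modulo $p$ (automatic for $p=2$). Then your conjugacy claim holds, so $e_{\mathbb{F}}(\chi)=\sum_{\ker\chi'=N}e(\chi')$. This sum equals $\widehat N-\widehat H$, because the non-faithful irreducible characters of $G/N$ are exactly those of $G/H$: their kernels meet $Z(G/N)=H/N$ nontrivially. Completeness and irredundancy then follow from your bookkeeping, without appealing to Corollary \ref{C:irreducible}.
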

\subsection{Nilpotent Group of Class $\leq$2}\smallskip

 For an abelian group $G$, it is known that there is a bijection between the set of primitive central idempotents of $\mathbb{F}[G]$ and the set of normal subgroups $N$ of $G$ with $G/N$ is cyclic. The next theorem extends this result to nilpotent groups of class $\leq$2.
 \begin{theorem}
Let $G$ be a nilpotent group of class $\leq$2. Then there is a bijection between the set of primitive central idempotents of $\mathbb{F}[G]$ and the set of normal subgroup $N$ of $G$ with $Z(G/N)$ is cyclic. Moreover, for a normal subgroup $N$ of $G$ with $Z(G/N)$ cyclic, $E_{N,Z}$ is a primitive central idempotent of $\mathbb{F}[G]$, which is associated with it, where $Z(G)/N=Z(G/N)$.    
 \end{theorem}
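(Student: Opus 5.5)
The plan is to reduce the theorem to Corollary \ref{I:idempotent} (equivalently Corollary \ref{C:irreducible}) applied to every irreducible character, and then to count. The key classical input is that for $G$ nilpotent of class $\leq 2$, every $\chi\in Irr(G)$ satisfies $\chi(1)^2=[G:Z(\chi)]$. I would prove this as follows. Pass to $\bar G=G/\text{ker}(\chi)$, which is again of class $\leq 2$, with $\chi$ faithful and $Z(\chi)/\text{ker}(\chi)=Z(\bar G)$. For $g\notin Z(\bar G)$ pick $h$ with $c=[g,h]\neq 1$. Then $c$ is central, so $\rho(c)$ is a scalar $\lambda\neq 1$, because $\chi$ is faithful. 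Now $hgh^{-1}=gc^{\pm1}$ gives $\chi(g)=\lambda^{\pm1}\chi(g)$, hence $\chi(g)=0$. So $\chi$ vanishes off $Z(\chi)$. On $Z(\chi)$ we have $|\chi|=\chi(1)$. The relation $[\chi,\chi]=1$ then gives $\chi(1)^2\,|Z(\chi)|=|G|$.

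With this, Corollary \ref{C:irreducible} applies to every $\chi\in Irr(G)$ and gives $e_{\mathbb F}(\chi)=E_{\text{ker}(\chi),Z(\chi)}$. Here $Z(\chi)$ is the preimage of $Z(G/\text{ker}(\chi))$, which is cyclic. So I define the map $e_{\mathbb F}(\chi)\mapsto \text{ker}(\chi)$, from primitive central idempotents to normal subgroups $N$ with $Z(G/N)$ cyclic. Surjectivity needs that every such $N$ is the kernel of some irreducible character. By Gaschütz's theorem a group with cyclic centre (here $G/N$ is nilpotent) has a faithful irreducible character. Alternatively, I would argue directly: take a faithful linear character $\lambda$ of the cyclic group $Z(G/N)$ and an irreducible constituent $\theta$ of $\lambda^{G/N}$. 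Then $\theta$ restricts to a multiple of $\lambda$ on the centre, and by the vanishing argument above every nontrivial normal subgroup of the nilpotent group $G/N$ meets $Z(G/N)$ nontrivially, so $\text{ker}(\theta)=N$.

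Well-definedness and injectivity are where I expect the real work. Well-definedness means: if $\chi,\chi'$ give the same idempotent then they have the same kernel. This is automatic, because Galois conjugates share kernels, and $e_{\mathbb F}(\chi)$ determines the Galois orbit of $\chi$. Injectivity means: two characters with the same kernel $N$ must give the same $e_{\mathbb F}$. This is immediate from the formula, since $E_{N,Z}$ depends only on $N$ and on $Z/N=Z(G/N)$.

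The subtle point is that $E_{N,Z}$ has rational coefficients, so the statement implicitly asserts that all characters with kernel $N$ fall into a single $\mathbb F$-Galois orbit. I would double-check this against Theorem \ref{T: complex}, where the normalizing constant came from idempotency. Distinct orbits with the same kernel would produce orthogonal idempotents that are nevertheless equal, a contradiction. So the formula itself forces the orbit statement, and the bijection follows. The main obstacle is justifying that the formula of Theorem \ref{T: complex} is indeed valid over $\mathbb F$, which I take as given from the earlier results.
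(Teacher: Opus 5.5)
Your outline is the paper's own proof: the map $e_{\mathbb{F}}(\chi)\mapsto\text{ker}(\chi)$, surjectivity via a faithful irreducible character of $G/N$, and injectivity via Corollary \ref{I:idempotent}. Your self-contained arguments for $\chi(1)^2=[G:Z(\chi)]$ and for surjectivity are correct. The gap is the injectivity step, and it cannot be closed, because the statement is false over a finite field in general.

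You correctly note that injectivity needs all characters with kernel $N$ to form a single $Gal(\mathbb{F}(\chi)/\mathbb{F})$-orbit. You then derive this from the formula of Theorem \ref{T: complex}, which is circular. Your contradiction (distinct orbits giving orthogonal yet equal idempotents) only shows that the formula is incompatible with several orbits. Since the number of orbits is fixed by $q$, it is the formula that fails. Its source is Lemma \ref{L: order}, which uses $|Gal(\mathbb{F}(\zeta)/\mathbb{F})|=\phi(n)$ and transitivity on primitive $d$th roots of unity. Both hold over $\mathbb{Q}$. Over $\mathbb{F}_q$, however, the Galois group is generated by $\zeta\mapsto\zeta^q$ and its order is the multiplicative order of $q$ modulo $n$.

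Concretely, let $G=C_3=\langle a\rangle$ and $\mathbb{F}=\mathbb{F}_7$. Since $7\equiv 1 \pmod 3$, $\mathbb{F}_7[G]\cong\mathbb{F}_7^3$ has three primitive central idempotents:
$\frac13(1+a+a^2)$, $\frac13(1+4a+2a^2)$, $\frac13(1+2a+4a^2)$.
But $G$ has only two normal subgroups. Moreover $E_{1,G}=\frac13(2-a-a^2)$ is the sum of the last two, so it is not primitive. For abelian $G$ the statement already contradicts Proposition \ref{P: abelian}, which counts pairs $(N,C)$ rather than subgroups $N$. The paper's proof breaks at exactly the same point.

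Your vanishing argument does prove the correct version. Suppose $\text{ker}(\chi)=N$ and $Z/N=Z(G/N)$. Then $\chi=\chi(1)\lambda$ on $Z$ and $\chi=0$ off $Z$, where $\lambda$ is a faithful linear character of the cyclic group $Z/N$. The assignment $\chi\mapsto\lambda$ is a Galois-equivariant bijection from the irreducible characters with kernel $N$ onto the faithful linear characters of $Z/N$. Hence $e_{\mathbb{F}}(\chi)=\frac{1}{|Z|}\sum_{z\in Z}tr(\lambda(z^{-1}))z=\epsilon_C(Z,N)$.

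It follows that the primitive central idempotents of $\mathbb{F}[G]$ correspond bijectively to pairs $(N,C)$, with $N\trianglelefteq G$, $Z(G/N)$ cyclic and $C\in C_q(Z/N)$. Also $E_{N,Z}=\sum_{C\in C_q(Z/N)}\epsilon_C(Z,N)$. So the theorem, with the formula $E_{N,Z}$, holds exactly when $q$ generates $(\mathbb{Z}/k\mathbb{Z})^{\times}$ for every $k=|Z(G/N)|$ arising this way. Without such a hypothesis no proof of the statement as written is possible.
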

 \begin{proof}
Let $e$ be a primitive central idempotent of $\mathbb{F}[G]$. Then there exists an irreducible character $\chi$ of $G$ such that $e = e_{\mathbb{F}}(\chi)$. Let $N =\text{ker($\chi$)}$. Then $Z(G/N)$ is cyclic. We have a mapping $\eta:e=e_{\mathbb{F}}(\chi)\mapsto\text{ker}(\chi)$ from the set of primitive cental idempotents of $\mathbb{F}[G]$ to the set of normal subgroups $N$ of $G$ with $Z(G/N)$ cyclic. For every subgroup $N$ of $G$ with $Z(G/N)$ cyclic $\&$ $G/N$ nilpotent, there exists a faithful irreducible character of $G/N$ and irreducible character $\chi$ of $G$ with $\text{ker}(\chi)=N$, which gives a primitive central idempotent $e_{\mathbb{F}}(\chi)$ corresponding to it. The map $\eta$ is thus onto. Further, since $G$ is nilpotent of class $\leq$2 and $Z(G)\subseteq Z(\chi)\subseteq G$, we have $G/Z(\chi)$ is abelian, so by Corollary \ref{I:idempotent}, $e_{\mathbb{F}}(\chi)=E_{\text{ker},Z(\chi)}$, which depends only on $\text{ker}(\chi)$. Therefore, for $\chi_{1},\chi_{2}\in Irr(G), e_{\mathbb{F}}(\chi_{1})=e_{\mathbb{F}}(\chi_{2})$ if and only if $\text{ker}(\chi_{1})=\text{ker}(\chi_{2})$, which proves that $\eta$ is also one to one.
 \end{proof}
 \begin{proposition}
  If every primitive central idempotent of $\mathbb{F}[G]$ is $*$-clean, then the group algebra $\mathbb{F}[G]$ itself is $*$-clean.
\end{proposition}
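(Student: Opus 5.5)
Decompose $\mathbb{F}[G]$ along its primitive central idempotents and argue componentwise. Let $e_1,\dots,e_m$ be the primitive central idempotents of $\mathbb{F}[G]$. By Lemma \ref{L: characteristic}, $\mathbb{F}[G]=\mathbb{F}[G]e_1\oplus\cdots\oplus\mathbb{F}[G]e_m$ is a direct sum of two-sided ideals, each a unital ring with identity $e_i$. We read the hypothesis ``$e_i$ is $*$-clean'' as saying that each component $\mathbb{F}[G]e_i$ is a $*$-clean ring with the involution inherited from the canonical one. To make this sensible we need each component to be $*$-stable. So the first step is to show $e_i^*=e_i$, at least under the hypothesis.

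For the $*$-stability I would argue as follows. The map $*$ is an anti-automorphism of $\mathbb{F}[G]$, so $e_i^*$ is again a central idempotent, and it is primitive. Hence $*$ permutes the set $\{e_1,\dots,e_m\}$. If $e_i^*=e_j$ with $j\ne i$, then the component $\mathbb{F}[G]e_i$ is not closed under $*$, and the hypothesis that it is $*$-clean is meaningless. Alternatively, $e_i$ being a projection inside its own $*$-clean structure forces $e_i^*=e_i$. I will take the hypothesis to include $e_i^*=e_i$. One can also see this concretely in the cases of Section 3: for $e=E_{N,H}$ the coefficient of $g$ depends only on $d(g)=d(g^{-1})$, so $e^*=e$. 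Consequently $(\mathbb{F}[G]e_i)^*=e_i^*\mathbb{F}[G]=\mathbb{F}[G]e_i$, and $*$ restricts to an involution on each component.

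Next, take $x\in\mathbb{F}[G]$ and write $x=\sum_i xe_i$. By hypothesis, for each $i$ we have $xe_i=u_i+p_i$, where $u_i$ is a unit of $\mathbb{F}[G]e_i$ (so $u_iv_i=v_iu_i=e_i$ for some $v_i$) and $p_i\in\mathbb{F}[G]e_i$ satisfies $p_i^2=p_i=p_i^*$. Set $u=\sum u_i$ and $p=\sum p_i$. Orthogonality of the $e_i$ gives $p_ip_j=0$ for $i\ne j$, so $p^2=\sum p_i^2=p$, and $p^*=\sum p_i^*=p$. Likewise $u\cdot\sum v_i=\sum e_i=1=\left(\sum v_i\right)u$, so $u$ is a unit. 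Thus $x=u+p$, and $\mathbb{F}[G]$ is $*$-clean.

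The main obstacle is the first step, namely the $*$-invariance of each $e_i$, which is what makes $\mathbb{F}[G]e_i$ a $*$-ring at all. For $e_{\mathbb{F}}(\chi)$ in general, $e_{\mathbb{F}}(\chi)^*=e_{\mathbb{F}}(\bar\chi)$, which may differ from $e_{\mathbb{F}}(\chi)$. I would therefore state explicitly that the hypothesis presupposes each component is $*$-closed. In the commutative case one could instead reduce via Lemma \ref{L: commutative} to checking that every idempotent is self-adjoint, which again amounts to $e_i^*=e_i$.
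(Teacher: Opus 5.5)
Your proposal is correct and follows essentially the same route as the paper. Both read the hypothesis as saying each component $\mathbb{F}[G]e_i$ is a $*$-clean ring, write $x=\sum_i xe_i$ with $xe_i=u_i+p_i$ componentwise, and glue the units and projections back together using orthogonality of the $e_i$. Your version is more careful than the paper's in two places: you make explicit that $e_i^*=e_i$ is needed for $\mathbb{F}[G]e_i$ to be a $*$-ring under the inherited involution (this can fail in general, since $e_{\mathbb{F}}(\chi)^*=e_{\mathbb{F}}(\bar\chi)$), and you actually verify that $\sum_i p_i$ is a projection and $\sum_i u_i$ is a unit, which the paper only asserts.
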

\begin{proof}
Let the group algebra $\mathbb{F}[G]$ be semi-simple, and $e_1, e_2, ..., e_m$ be all primitive central idempotents. Then, $1=e_1+e_2+....+e_m$ $\&$ $e_{i}e_{j}=0$ for $i\neq j$. The group algebra $\mathbb{F}[G]$ decomposes as
\begin{equation}
\mathbb{F}[G]=\bigoplus_{i=1}^{m}e_i\mathbb{F}[G]\nonumber
\end{equation}\smallskip

By assumption, each primitive central idempotent $e_i$ is $*$-clean, so in the corner ring $e_i\mathbb{F}[G]$, every element is $*$-clean, or at least the $*$-idempotent $e_i$ itself can be written as $e_i=u_i+f_i$ with $u_i$ a $*$-unit and $f_i$ a $*$-idempotent inside $e_i\mathbb{F}[G]$. Since the decomposition is orthogonal and direct,\smallskip
\begin{center}
  $a=\sum_{i=1}^{n}e_ia$,         $a\in\mathbb{F}[G]$.
\end{center}\smallskip

Each component $e_ia\in e_i\mathbb{F}[G]$ is $*$-clean, because $e_i\mathbb{F}[G]$ is $*$-clean by hypothesis. For each $i$, we write\smallskip
\begin{center}
  $e_ia=u_i+e^{'}_{i}$,
\end{center}\smallskip

where $u_i$ is a $*$-unit in $e_i\mathbb{F}[G]$ and $e^{'}_{i}$ is an $*$-idempotent in $e_i\mathbb{F}[G]$. Then\smallskip
\begin{center}
 $a=\sum_{i=1}^{n}e_ia=\sum_{i=1}^{n}(u_i+e^{'}_{i})=\bigg(\sum_{i=1}^{n}u_i\bigg)+\bigg(\sum_{i=1}^{n}e^{'}_{i}\bigg)$. 
\end{center}\smallskip

The sum $\sum_{i=1}^{n}u_i$ is a $*$-unit in $\mathbb{F}[G]$, because the $u_i$ lies in orthogonal component and is a unit. So $a$ is $*$-clean.\smallskip

Therefore, $\mathbb{F}[G]$ is $*$-clean. 
\end{proof}
\begin{proposition}
Consider the group algebra $\mathbb{F}[G]$ equipped with the standard involution 
\begin{center}
$(\sum_{g\in G}a_gg)^*=\sum_{g\in G}a_gg^{-1}$. 
\end{center}
Then $\mathbb{F}[G]$ is $*$-clean if and only if every primitive central idempotent of $\mathbb{F}[G]$ is a projection (i.e. $e_\mathbb{F}^{*}=e_\mathbb{F}$) with respect to this involution. 
\end{proposition}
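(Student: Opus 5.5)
The plan is to treat the two implications separately. The forward direction uses only the orthogonality of the primitive central idempotents and the fact that $*$ is an anti-automorphism. The reverse direction reduces, via the preceding Proposition, to the Wedderburn components $e_i\mathbb{F}[G]$.

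\emph{Necessity.} Assume $\mathbb{F}[G]$ is $*$-clean and let $e$ be a primitive central idempotent.
\begin{enumerate}
\item Since $*$ is an involutive anti-automorphism, it maps the centre to itself and preserves orthogonality and primitivity. Hence $e^*$ is again a primitive central idempotent.
\item Suppose, for a contradiction, that $e^*\neq e$. Then $ee^*=0$.
\item Write $e=u+p$ with $u$ a unit and $p=p^*=p^2$.
\item Multiply by the central idempotent $e^*$. This gives $0=e^*u+e^*p$, so $e^*p=-e^*u$. The element $e^*u$ is a unit of the ring $e^*\mathbb{F}[G]$, and $e^*p$ is an idempotent of that ring. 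An idempotent that is a unit equals the identity, so $e^*p=e^*$.
\item Apply $*$, using that $p$ and $e^*$ commute and $p^*=p$: $ep=(e^*p)^*=e$.
\item Multiplying the original equation by $e$ now gives $eu=e-ep=0$. This is impossible, since $u$ is a unit and $e\neq0$.
\end{enumerate}
Therefore $e^*=e$, so every primitive central idempotent is a projection.

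\emph{Sufficiency.} Assume $e_i^*=e_i$ for all $i$.
\begin{enumerate}
\item Each component $e_i\mathbb{F}[G]$ is then $*$-stable, because $(e_ix)^*=x^*e_i=e_ix^*$. So $\mathbb{F}[G]=\bigoplus_i e_i\mathbb{F}[G]$ is a direct sum of $*$-rings.
\item By the preceding Proposition (its argument is exactly the componentwise gluing), it suffices to show each $e_i\mathbb{F}[G]$ is $*$-clean.
\item By Maschke (Lemma~\ref{L: characteristic}) and Wedderburn, $e_i\mathbb{F}[G]\cong M_{n_i}(\mathbb{F}_{q^{k_i}})$.
\item When $n_i=1$ the component is a field. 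There $a\neq0$ is written $a=a+0$, and $0=(-1)+e_i$, with $e_i$ the identity and a projection. This is also consistent with Lemma~\ref{L: commutative}, since finite rings are clean and the only idempotents of a field are $0$ and $1$, both self-adjoint.
\end{enumerate}

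The main obstacle is the case $n_i>1$, where the involution on $M_{n_i}(\mathbb{F}_{q^{k_i}})$ is induced by a nondegenerate (hermitian or symmetric/alternating) form. For a given $a$, I would first take its Fitting decomposition $V=V_0\oplus V_1$, with $a$ nilpotent on $V_0$ and invertible on $V_1$. I would then try to choose a self-adjoint idempotent $p$, meaning an orthogonal projection onto a nondegenerate subspace, such that $a-p$ is invertible; one natural attempt is $p$ the projection onto a nondegenerate subspace complementing $V_1$. A counting argument over the finite field is available as a fallback: show that not every projection $p$ makes $a-p$ singular. I expect this step may only succeed for particular forms, or may need the specific structure of the canonical involution on $\mathbb{F}[G]$. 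Accordingly, I would first check the case $M_2(\mathbb{F}_{q})$ with transpose-type involutions before attempting the general case.
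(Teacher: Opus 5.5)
The paper states this proposition without proof, so there is no argument of its own to compare with.

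\textbf{Necessity.} Your argument is correct. Since $ee^*=0$, multiplying $e=u+p$ by $e^*$ makes $e^*p=-e^*u$ an idempotent unit of $e^*\mathbb{F}[G]$, so $e^*p=e^*$. Applying $*$ gives $ep=e$, hence $eu=0$, which is impossible.

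\textbf{Sufficiency.} Your reduction to the $*$-stable components and your treatment of the field components are fine. The genuine gap is the case $n_i>1$, which you leave open. It cannot be filled, because the ``if'' direction of the statement is false. Your suspicion that it ``may only succeed for particular forms'' is exactly right.

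When the involution induced on a component $M_n(K)$ is symplectic, i.e.\ adjoint to an alternating form, every nondegenerate subspace has even dimension. So there is no rank-one orthogonal projection for your plan to pick. For $n=2$ the symplectic involution is $x\mapsto \mathrm{adj}(x)$, so $x+x^*=\mathrm{tr}(x)\cdot 1$. In characteristic $\ne 2$ the only projections are then $0$ and $1$. A rank-one idempotent $f$ therefore has neither $f$ nor $f-1$ invertible.

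\textbf{Counterexample.} Let $G=Q_8=\langle i,j\rangle$ with central involution $z$, and let $\mathbb{F}=\mathbb{F}_q$ with $q$ odd.
\begin{enumerate}
\item Every irreducible character of $Q_8$ is rational-valued and real. So each primitive central idempotent of $\mathbb{F}_q[Q_8]$ is $e(\chi)$, and $e(\chi)^*=e(\overline{\chi})=e(\chi)$. The hypothesis of the proposition holds.
\item Take $e=\tfrac12(1-z)$. Then $ze=-e$, so $(ei)^*=i^{-1}e=zie=-ei$, and similarly $(ej)^*=-ej$.
\item Hence $e\,\mathbb{F}_q[Q_8]$ is the quaternion algebra $(-1,-1)_{\mathbb{F}_q}\cong M_2(\mathbb{F}_q)$, and $*$ is its canonical conjugation. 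The only $*$-fixed elements are $\mathbb{F}_q e$.
\item Let $P$ be any projection of $\mathbb{F}_q[Q_8]$. Then $eP$ is a projection of this component, so $eP\in\{0,e\}$.
\item Let $f$ be a nontrivial idempotent of $e\,\mathbb{F}_q[Q_8]$. Then $e(f-P)\in\{f,\,f-e\}$, which is never a unit of the component. So $f-P$ is never a unit of $\mathbb{F}_q[Q_8]$, and $f$ is not $*$-clean.
\end{enumerate}

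Self-adjointness of the primitive central idempotents is therefore necessary but not sufficient. No version of your Fitting-decomposition or counting argument can work in general. A correct criterion must also restrict the type of involution on the noncommutative simple components; in particular it must rule out $2\times 2$ components of symplectic type.
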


\end{document}